\newtheorem{thm}{Theorem}[section]
\newtheorem{lem}[thm]{Lemma}
\newtheorem{prob}[thm]{Open problem}
\newtheorem{con}[thm]{Conjecture}
\newtheorem{prop}[thm]{Proposition}
\newtheorem{cor}[thm]{Corollary}
\newcommand{\thmref}[1]{Theorem~\ref{#1}}
\newcommand{\lemref}[1]{Lemma~\ref{#1}}
\newtheorem{rmk}[thm]{Remark}
\numberwithin{equation}{section}
\let\@@pmod\pmod
\DeclareRobustCommand{\pmod}{\@ifstar\@pmods\@@pmod}
\def\@pmods#1{\mkern4mu({\operator@font mod}\mkern 6mu#1)}
\theoremstyle{plain}
\newtheorem{theorem}{Theorem}[section]
\theoremstyle{definition}
\newtheorem{example}[theorem]{Example}
\newcommand{\be}{\begin{equation}}
	\newcommand{\ee}{\end{equation}}
\begin{document}
	\baselineskip=17pt
	
	\title[Ramanujan congruences]
	{Ramanujan-style congruences for prime level}
	\author{Arvind Kumar, Moni Kumari, Pieter Moree and Sujeet Kumar Singh}
	
\address{Department of Mathematics, Indian Institute of Technology Jammu, Jagti NH-44, PO Nagrota, Jammu 181221, India.}
\email{arvind.kumar@iitjammu.ac.in}
	\address{Max-Planck-Institut f\"ur Mathematik, Vivatsgasse 7, 
	53111 Bonn, Germany}
	\email{kumari@mpim-bonn.mpg.de}
	
	\address{Max-Planck-Institut f\"ur Mathematik, Vivatsgasse 7, 
		53111 Bonn, Germany}
	\email{moree@mpim-bonn.mpg.de }
	
	\address{School of Mathematical Sciences, The University of Nottingham, University Park, Nottingham NG7 2RD, United Kingdom.}
	\email{sujeet170492@gmail.com}

	\subjclass[2010]{11F33, 11F11, 11F80, 11N37}
	\date{\today}
	\keywords{Modular forms, 
		Ramanujan congruences, Euler-Kronecker constants}

	\maketitle
	
	\begin{abstract}
		We establish Ramanujan-style congruences modulo certain primes $\ell$ between an Eisenstein series of weight $k$, prime level $p$ and a cuspidal newform in the $\varepsilon$-eigenspace of the Atkin-Lehner operator inside the space of cusp forms of weight $k$ for $\Gamma_0(p)$. Under a mild assumption, this refines a result of Gaba-Popa. We use these congruences and recent work of Ciolan, Languasco and the third author on Euler-Kronecker constants, to quantify the non-divisibility of the 
		Fourier coefficients involved by $\ell.$ The degree of the number field generated by these coefficients we investigate using recent results on prime factors of shifted prime numbers.
	\end{abstract}
	
	\section{Introduction} 
	Let $E_k$ be the Eisenstein series of even weight $k\ge  2$ for the group $SL_2({\mathbb Z})$,
	normalized so that its Fourier series expansion is
	$$E_k(z)=-\frac{B_k}{2k}+\sum_{n=1}^{\infty}\sigma_{k-1}(n)e^{2\pi i n z},$$
	where $B_k$ is the $k$th Bernoulli number and $\sigma_r(n) = \sum_{d|n}d^{r}$ is the
	$r$-th sum of divisors function.
	The prototype of a Ramanujan congruence goes back 
	to 1916 and asserts that
	\begin{equation}\label{RC}
		\tau(n)\equiv \sigma_{11}(n)  \pmod*{691},
	\end{equation}
	for every positive integer $n$. This can be viewed as a (coefficient-wise) congruence between the unique cusp form $\Delta(z)=\sum_{n=1}^{\infty}\tau(n)e^{2\pi i n z}$ of weight $12$ and the Eisenstein series 
	$E_{12}(z)$, namely
	$	\Delta \equiv E_{12}  \pmod*{691}.$
	There are several well-known ways to prove, interpret, and 
	generalize this. 
	For example, to higher weights eigenforms of level 1
	by Datskovsky-Guerzhoy \cite{dagu}, to newforms  of weight $k$ and prime level $p$  by Billerey-Menares \cite{bime}, and to Fourier coefficients of index coprime to $p$ by Dummigan-Fretwell \cite{dufr}. The latter two authors were primarily motivated
	by an interesting relation of these congruences with the Bloch-Kato conjecture for 
	the partial Riemann zeta function.
	Gaba-Popa \cite{gapo} refined these results, by determining, under some technical conditions, also the Atkin-Lehner eigenvalue of the involved newform, and thus obtained congruences for all coefficients.
	\par 	To make our statements more concrete, we first define 
	for $\varepsilon\in \{\pm 1\}$ an Eisenstein series of 
	even weight $k\ge 2$ and prime level $p,$ 
	namely
	\begin{equation}\label{ES}
		E_{k,p}^{\varepsilon}(z):=E_k(z)+ \varepsilon E_k| W_p(z)={E_k(z)+{\varepsilon}p^{k/2}E_k(pz)}, 
	\end{equation}
	where $W_p$ is the Atkin-Lehner operator.	
	By $M_k^{\varepsilon}(p)$ (resp.\,$S_k^{\varepsilon}(p)$) we denote the $\varepsilon$-eigenspace of the Atkin-Lehner operator $W_p$ inside $M_k(p)$ 
	(resp.\,$S_k(p)$), the space of modular forms (resp.\,cusp forms) of weight $k$ and for the group $\Gamma_0(p)$.
	It is known that $E_{2,p}^{-1}\in M_2^{-1}(p)$ and  
	$ E_{k,p}^{\varepsilon}\in M_k^{\varepsilon}(p)$ for $k\ge 4.$ Using the Fourier series expansion of $E_k$, we obtain
	\begin{equation*}
		E_{k,p}^{\varepsilon}(z)=-\frac{B_k}{2k}\varepsilon(\varepsilon+  p^{k/2})+\sum\limits_{n\ge 1} \left(\sigma_{k-1}(n)+\varepsilon p^{k/2}\sigma_{k-1}\left(\frac{n}{p}\right)\right)e^{2\pi i n z},
	\end{equation*}
	where $\sigma_{k-1}\left(\frac{n}{p}\right)=0$ if $p\nmid n$. We now recall the main result of Gaba-Popa, 
	the proof of which relies on the theory of period polynomials for congruence subgroups developed by Paşol and Popa \cite{papo}.
	\begin{thm}\label{gapo_result} \cite[Theorem 1]{gapo}
		Let $k\ge 4$ be an even integer, $p$ a prime and $\varepsilon \in \{\pm  1\}$. 
		Let $\ell\ge  k+2$ be a  prime such that 
		\begin{center}
			$\ell  \mid \frac{B_k}{2k}(\varepsilon + p^{k/2})$ and
			$\ell  \mid (\varepsilon+p^{k/2}) (\varepsilon+p^{k/2-1}).$ 
		\end{center}
		In case $\ell \nmid (\varepsilon+  p^{k/2})$ we assume 
		in addition that there exists an even integer $n$
		with $0 <n <k$ such that
		$\ell\nmid B_n B_{k-n} (p^{n-1}-1).$
		Then, there exists a newform $f\in S_k^{\varepsilon}(p)$ and a prime ideal $\lambda$ lying above $\ell$ in the coefficient field  of $f$ such that 
		\begin{equation*}
			f\equiv E_{k,p}^{\varepsilon} \pmod*{\lambda}.
		\end{equation*}
	\end{thm}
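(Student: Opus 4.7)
The plan is to deduce the congruence via the Deligne--Serre lemma, after first exhibiting a congruence between $E_{k,p}^{\varepsilon}$ and a cusp form in $S_k^{\varepsilon}(p)$ modulo $\ell$. The two key ingredients are: (i) the vanishing of the constant terms of $E_{k,p}^{\varepsilon}$ modulo $\ell$ at both cusps of $\Gamma_0(p)$, and (ii) a mechanism, namely the period polynomial framework of Paşol--Popa alluded to in the excerpt, that upgrades this vanishing into a genuine $q$-expansion congruence with an honest cusp form.

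First I would check that the constant terms of $E_{k,p}^{\varepsilon}$ vanish modulo $\ell$ at both cusps. At the cusp $\infty$ the constant term is $-\tfrac{B_k}{2k}\varepsilon(\varepsilon + p^{k/2})$, whose divisibility by $\ell$ is exactly the first hypothesis. Because $E_{k,p}^{\varepsilon}|W_p = \varepsilon E_{k,p}^{\varepsilon}$, the constant term at the cusp $0$ differs from the one at $\infty$ only by the $\ell$-unit $\varepsilon$, so it also vanishes modulo $\ell$. Thus $E_{k,p}^{\varepsilon}$ behaves, modulo $\ell$, like a genuine cusp form inside $M_k^{\varepsilon}(p)$.

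To turn this into an actual congruence $E_{k,p}^{\varepsilon} \equiv g \pmod{\ell}$ with some $g \in S_k^{\varepsilon}(p)$ having $\ell$-integral Fourier coefficients, I would exploit the integral period-polynomial structure on $S_k^{\varepsilon}(p)$ developed by Paşol and Popa. This gives an Eichler--Shimura type identification under which the cuspidal vanishing of constant terms modulo $\ell$ translates to a genuine $q$-expansion congruence. The Deligne--Serre lemma then produces a Hecke eigenform $f$ (with coefficients in some number field $K$) and a prime ideal $\lambda$ of $K$ above $\ell$ such that the Hecke eigenvalues of $f$ and $E_{k,p}^{\varepsilon}$ coincide modulo $\lambda$; equivalently, $f \equiv E_{k,p}^{\varepsilon} \pmod{\lambda}$ as $q$-expansions.

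The final and most delicate step is to ensure that the eigenform produced by Deligne--Serre is an Atkin--Lehner $\varepsilon$-newform, rather than an oldform. Since $p$ is prime, the old subspace of $S_k(p)$ is spanned by $g(z)$ and $g(pz)$ as $g$ ranges over level-$1$ newforms; on each such plane the eigenvalues of $W_p$ are governed by the factors $(\varepsilon + p^{k/2})$ and $(\varepsilon + p^{k/2-1})$, and the analogous statement holds for the level-$1$ Eisenstein contribution. The second hypothesis $\ell \mid (\varepsilon + p^{k/2})(\varepsilon + p^{k/2-1})$ guarantees that $\varepsilon$ actually arises as a mod-$\ell$ Atkin--Lehner eigenvalue inside $S_k^{\varepsilon}(p)$, and in the residual subcase $\ell \nmid (\varepsilon + p^{k/2})$ the auxiliary assumption $\ell \nmid B_n B_{k-n}(p^{n-1}-1)$ for some even $0<n<k$ is designed to prevent the congruence from being absorbed by an oldform --- morally, it blocks any accidental congruence between the Eisenstein eigensystem at weight $k$ and the level-$1$ cuspidal/Eisenstein structures at weights $n$ or $k-n$ that would force the mod-$\lambda$ reduction to factor through the old part. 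I expect this matching of divisibility hypotheses with the new/old decomposition of the Hecke algebra modulo $\ell$ to be the main technical obstacle of the proof.
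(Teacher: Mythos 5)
This statement is a cited result (\cite[Theorem 1]{gapo}); the paper does not reprove it, but its own Theorems \ref{main}, \ref{rmain}, and \ref{eigenform} strengthen it by a genuinely different route, namely mod-$\ell$ modular forms, Carayol's lifting lemma, the Deligne--Serre lemma, the Diamond--Ribet level-raising theorem, and a result of Langlands on the local Galois representation at $p$ --- deliberately \emph{avoiding} period polynomials. Your sketch instead leans on the Pa\c{s}ol--Popa period-polynomial machinery, in the spirit of Gaba--Popa's original argument; that is a legitimate orientation, but as written it has two genuine gaps.

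First, you never establish that the mod-$\ell$ reduction of $E_{k,p}^{\varepsilon}$ is an eigenfunction of $U_p$, which is the hypothesis Deligne--Serre actually needs (eigenform for all Hecke operators, including $U_p$). In characteristic zero $E_{k,p}^{\varepsilon}$ is \emph{not} a $U_p$-eigenfunction; it becomes one mod $\ell$ precisely because of the second hypothesis $\ell \mid (\varepsilon+p^{k/2})(\varepsilon+p^{k/2-1})$ (see the explicit formula for $U_p E_{k,p}^{\varepsilon}$ in the proof of \thmref{eigenform}, where this divisibility kills the extra term). Your reading of that hypothesis as ``guaranteeing $\varepsilon$ arises as a mod-$\ell$ Atkin--Lehner eigenvalue'' does not match its actual function. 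Moreover, the jump from ``constant terms vanish at both cusps mod $\ell$'' to ``congruent to an honest cusp form with $\ell$-integral coefficients'' does not require the Pa\c{s}ol--Popa Eichler--Shimura structure at all; the $q$-expansion principle plus Carayol's lemma already give it, and invoking period polynomials for this step is a red herring. Second, the hardest part --- converting the eigenform $h$ produced by Deligne--Serre, which may perfectly well be $p$-old, into a \emph{newform} in $S_k^{\varepsilon}(p)$ --- is only gestured at. The auxiliary condition $\ell \nmid B_n B_{k-n}(p^{n-1}-1)$ is not a device to ``block absorption by the old part''; in the paper's proof the oldform case is resolved by applying Diamond--Ribet level raising to the level-1 eigenform underlying $h$, and then Langlands' description of $\overline\rho_{f,\Lambda}|_{G_p}$ pins down the Atkin--Lehner sign. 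Without some concrete mechanism of this kind, the final newness and sign claims remain unjustified.
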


We emphasize that from the latter congruence it follows that if $\overline \rho_{f,\lambda}$ 
			denotes the mod $\lambda$ Galois representation, then (up to semisimplification) $\overline \rho_{f,\lambda} \simeq 1 \oplus \chi_\ell^{k-1}$, where $\chi_\ell$ is the mod $\ell$ cyclotomic character. Therefore, \thmref{gapo_result} gives a sufficient conditon on the prime $\ell$ 
			such that the representation $1 \oplus \chi_\ell^{k-1}$ arises from a newform in $S_k^\varepsilon(p)$.
				
	The purpose of this paper is 
	to strengthen \thmref{gapo_result} and, on
	a somewhat different note, to quantify 
	the non-divisibility of the Fourier coefficients 
	of $E_{k,p}^{\varepsilon}$. The corresponding results
	are presented in the next section, respectively in Section \ref{quanti}.
	\subsection{Strengthening of Theorem \ref{gapo_result}}	
	We sharpen Theorem \ref{gapo_result} in the next two theorems.
	
	\begin{thm}\label{main}
		Let $k\ge 2$ be an even integer, $p$ a prime  and $\varepsilon \in \{\pm  1\}$. 
		If $k=2$, we
	also	assume that $\varepsilon=-1.$ 
		Let $\ell\ge \max\{5, k-1\}$ be a prime such that $p\not\equiv -1\pmod*{\ell}$. 
		Then the following are equivalent:
		\begin{itemize}
			\item[$(1)$] $\ell  \mid \frac{B_k}{2k}(\varepsilon + p^{k/2})$ and
			$\ell  \mid (\varepsilon+p^{k/2}) (\varepsilon+p^{k/2-1});$
			\item[$(2)$]
			the existence of a newform $f\in S_k^{\varepsilon}(p)$ and a prime ideal $\lambda$ lying above $\ell$ in the coefficient field  of $f$ such that 
			$$f\equiv E_{k,p}^{\varepsilon} \pmod*{\lambda}.$$
		\end{itemize}
		\end{thm}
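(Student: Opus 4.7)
The plan is to establish the two directions separately. The implication $(2)\Rightarrow(1)$ is an elementary coefficient-comparison and is a new addition relative to \thmref{gapo_result}. The implication $(1)\Rightarrow(2)$ refines Gaba-Popa by relaxing the lower bound on $\ell$ and by trading their auxiliary Bernoulli-non-vanishing hypothesis for the single condition $p\not\equiv-1\pmod \ell$; the bulk of the work lies here.

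For $(2)\Rightarrow(1)$, I would match two specific Fourier coefficients of the hypothetical congruence $f\equiv E_{k,p}^{\varepsilon}\pmod \lambda$. Since $f$ is cuspidal, its constant term is zero, so the constant term of $E_{k,p}^{\varepsilon}$ must vanish modulo $\lambda$; as $\varepsilon=\pm 1$ is a unit, this yields $\ell\mid \tfrac{B_k}{2k}(\varepsilon+p^{k/2})$. For the second divisibility I would use the classical identity $a_p(f)=-\varepsilon p^{k/2-1}$ relating the $p$-th Hecke eigenvalue and the Atkin-Lehner eigenvalue of a newform of prime level $p$, together with $a_p(E_{k,p}^{\varepsilon})=1+p^{k-1}+\varepsilon p^{k/2}$ from the explicit Fourier expansion. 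Since $\varepsilon^{2}=1$, a short algebraic identity gives
\[
1+p^{k-1}+\varepsilon p^{k/2}+\varepsilon p^{k/2-1} = (\varepsilon+p^{k/2})(\varepsilon+p^{k/2-1}),
\]
which forces $\ell\mid(\varepsilon+p^{k/2})(\varepsilon+p^{k/2-1})$.

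For $(1)\Rightarrow(2)$ I would split according to whether $\ell$ divides $\varepsilon+p^{k/2}$. If $\ell\mid(\varepsilon+p^{k/2})$, I would follow the Gaba-Popa strategy via the Pasol-Popa period-polynomial machinery, but (i) sharpen the bookkeeping to lower the bound on $\ell$ from $\ell\ge k+2$ down to $\ell\ge\max\{5,k-1\}$, and (ii) treat the weight-$2$ subcase ($\varepsilon=-1$) separately: there $E_{2,p}^{-1}$ is the classical weight-$2$ Eisenstein series of level $p$, and the desired congruence can be produced by a direct Eisenstein-ideal argument inside $M_{2}^{-1}(p)$ in the spirit of Mazur. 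If, on the other hand, $\ell\nmid(\varepsilon+p^{k/2})$, then $(1)$ forces both $\ell\mid\tfrac{B_k}{2k}$ and $\ell\mid(\varepsilon+p^{k/2-1})$, and the task is to replace Gaba-Popa's auxiliary hypothesis entirely by $p\not\equiv-1\pmod \ell$.

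The main obstacle is precisely this last case. I would revisit the Pasol-Popa period pairing whose $\ell$-adic non-vanishing certifies the existence of a cuspidal newform congruent to $E_{k,p}^{\varepsilon}$, and trace how, under the divisibilities $\ell\mid B_k/(2k)$ and $\ell\mid(\varepsilon+p^{k/2-1})$, the pairing reduces modulo $\ell$ to an expression whose only residual obstruction is a factor of $p+1$; the hypothesis $p\not\equiv-1\pmod \ell$ then eliminates this obstruction and produces the desired newform $f$ together with a prime $\lambda\mid \ell$. The essential new input is thus the reduction of Gaba-Popa's family of Bernoulli non-vanishing conditions to the single congruence $p\not\equiv-1\pmod \ell$.
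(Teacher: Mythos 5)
Your $(2)\Rightarrow(1)$ direction is correct and matches the paper exactly: compare constant terms, then compare $p$-th Fourier coefficients using $a_f(p)=-\varepsilon p^{k/2-1}$ and $a_p(E_{k,p}^{\varepsilon})=1+p^{k-1}+\varepsilon p^{k/2}$, together with the factorization of $1+p^{k-1}+\varepsilon p^{k/2}+\varepsilon p^{k/2-1}$.

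Your plan for $(1)\Rightarrow(2)$, however, has a genuine gap. You propose to stay within the Gaba-Popa / Pa\c{s}ol-Popa period polynomial framework and (i) ``sharpen the bookkeeping'' to lower the bound on $\ell$ and (ii) ``trace how the pairing reduces modulo $\ell$'' so that the extra Bernoulli conditions collapse to $p\not\equiv-1\pmod\ell$. But as Gaba and Popa themselves observe (and as the paper's remark (a) recalls), their period-polynomial method \emph{breaks down} at $\ell=k\pm 1$: this is not a matter of finer bookkeeping inside the same argument. And you do not exhibit the claimed ``residual obstruction is a factor of $p+1$'' inside the period pairing; that is precisely the new content one would have to supply, and as stated it is speculative. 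The paper's actual proof abandons period polynomials entirely. It first shows (Theorem \ref{eigenform}) that the hypotheses force $\overline{E}_{k,p}^{\varepsilon}$ to be a cuspidal Hecke eigenform in $S_k(p,\overline{\mathbb F}_\ell)$, then lifts it via Carayol's lemma and the Deligne-Serre lifting lemma to a characteristic-zero eigenform $h\in S_k(p)$ congruent to $E_{k,p}^{\varepsilon}$. If $h$ is new, one reads off its Atkin-Lehner sign from $a_h(p)$. If $h$ is old, one identifies its mod-$\lambda$ Galois representation with $1\oplus\chi_\ell^{k-1}$, applies the Diamond-Ribet level raising theorem (which requires only $\ell>k-2$, hence the improved bound), and then pins down the Atkin-Lehner eigenvalue of the resulting newform using Langlands' description of $\overline{\rho}_{f,\Lambda}|_{G_p}$; the hypothesis $p\not\equiv-1\pmod\ell$ enters exactly here, because the comparison of traces yields $\ell\mid(\delta-\varepsilon)p^{k/2-1}(p+1)$. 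So the condition $p\not\equiv-1\pmod\ell$ does play the role you anticipated (killing a $p+1$ obstruction), but it arises from the local Galois-theoretic analysis, not from the period pairing, and the relaxed lower bound on $\ell$ comes from level raising, not from tighter estimates in the period-polynomial method. Your weight-$2$ subcase is also handled uniformly in the paper (since $S_2(1)=0$, the lifted eigenform is automatically new), so no separate Mazur-style Eisenstein-ideal argument is needed.
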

	
	\noindent This result improves on Theorem 
	\ref{gapo_result} in three different aspects.
	\begin{itemize}
		\item[(a)] Instead of $\ell > k+1,$ now also  $\ell=k\pm 1$ is allowed. Gaba-Popa \cite{gapo} pointed out that, based on several numerical examples, they expect that their result should hold even for $\ell=k\pm 1,$  although their method breaks down for these values of $\ell$. Therefore, it 
		is reasonable to expect that \cite[Conjecture 3.2]{bime} and \cite[Conjecture on p. 53]{gapo}
		should also hold for $\ell=k\pm 1$.
		\item[(b)] Taking $k=2$ is allowed and hence this recovers an earlier result of Mazur \cite[Proposition 5.12 (ii)]{maz} who used geometry of modular curves associated to  weight 2 modular forms and the $q$-expansion principle to prove his result.
		\item[(c)] There is no condition on $B_n B_{k-n} (p^{n-1}-1)$ anymore in case $\ell \nmid (\varepsilon+  p^{k/2}).$ 
	\end{itemize}
	Comparing \thmref{main} with \thmref{gapo_result}, we see that there
	is now the extra condition $p\not\equiv -1\pmod*{\ell}$ (redundant for $k=2$). In some special cases we remove this condition, together with the assumption $\ell \ge k-1,$ by proving the following variant of  \thmref{main}.  
		\begin{thm}\label{rmain}
			Let $k\ge 2$ be an even integer, $\ell \ge 5$ and $p$ be  primes and $\varepsilon \in \{\pm  1\}$. If $k=2$, we also assume that $\varepsilon=-1$. Suppose that $\ell \mid \frac{B_k}{2k}(\varepsilon+  p^{k/2})$. We further assume that $k\not\equiv0 \pmod{\ell-1}$ and  $\ell\nmid \frac{B_k}{2k}$. 	Then there exists a newform $f\in S_k^\varepsilon(p)$ and a prime ideal $\lambda$ over $\ell$ in the coefficient field of $f$ such that 
			$$
			f\equiv E_{k,p}^{\varepsilon} \pmod*{\lambda}.
			$$		
		\end{thm}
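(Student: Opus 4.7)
The strategy is to first show that $E_{k,p}^{\varepsilon}$ reduces modulo $\ell$ to a mod-$\ell$ cusp form on $\Gamma_0(p)$, and then lift the associated Hecke eigensystem back to a characteristic-zero newform in $S_k^{\varepsilon}(p)$ via a Deligne--Serre--type argument. This differs from the period-polynomial approach of \thmref{main}: the arithmetic hypotheses $\ell\nmid \frac{B_k}{2k}$ and $k\not\equiv 0\pmod*{\ell-1}$ supply exactly the integrality and cuspidality needed for this route. Combining $\ell\nmid \frac{B_k}{2k}$ with $\ell\mid \frac{B_k}{2k}(\varepsilon + p^{k/2})$ immediately forces $\ell\mid (\varepsilon + p^{k/2})$. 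By von Staudt--Clausen, the hypothesis $k\not\equiv 0\pmod*{\ell-1}$ ensures that $\ell$ does not divide the denominator of $B_k$, so $E_{k,p}^{\varepsilon}$ has $\ell$-integral Fourier expansion and its constant term at $\infty$, namely $-\frac{B_k}{2k}\varepsilon(\varepsilon+p^{k/2})$, vanishes modulo $\ell$. Since $E_{k,p}^{\varepsilon}| W_p=\varepsilon E_{k,p}^{\varepsilon}$ and $W_p$ swaps the cusps $0$ and $\infty$ of $X_0(p)$, the constant term at $0$ also vanishes modulo $\ell$, so the reduction of $E_{k,p}^{\varepsilon}$ is a mod-$\ell$ cusp form of weight $k$.

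Next, let $\mathbb T$ be the Hecke algebra generated by $T_n$ (for $(n,p)=1$) and $W_p$ acting on $M_k(\Gamma_0(p))$, and $\mathbb T^0$ its cuspidal quotient on $S_k(\Gamma_0(p))$. The Eisenstein series defines an eigensystem $\phi\colon \mathbb T\to \overline{\mathbb Z}$ with $T_n\mapsto \sigma_{k-1}(n)$ and $W_p\mapsto \varepsilon$, whose mod-$\ell$ reduction $\bar\phi$ has kernel a maximal ideal $\mathfrak m$. The mod-$\ell$ cuspidality above, together with the observation that $k\not\equiv 0\pmod*{\ell-1}$ guarantees that $E_{k,p}^{\varepsilon}\pmod*{\ell}$ has Katz filtration exactly $k$, implies that $\mathfrak m$ is supported on $S_k(\Gamma_0(p))$ as a $\mathbb T$-module; a standard Deligne--Serre lift then yields an eigenform $f\in S_k^{\varepsilon}(p)$ defined over the ring of integers of some number field $K$ and a prime $\lambda\mid \ell$ with $a_n(f)\equiv \sigma_{k-1}(n)\pmod*{\lambda}$ for $(n,p)=1$. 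To verify that $f$ is a newform rather than an oldform from $S_k(\mathrm{SL}_2(\mathbb Z))$, note that any such oldform would arise from a level-one cusp form $g\equiv E_k\pmod*{\lambda}$, which by Herbrand--Ribet forces $\ell\mid \frac{B_k}{2k}$, against our hypothesis. Finally, the Atkin--Lehner identity $a_p(f)=-\varepsilon p^{k/2-1}$ (for $k\geq 4$) together with $p^{k/2}\equiv -\varepsilon\pmod*{\ell}$ yields $a_p(f)\equiv a_p(E_{k,p}^{\varepsilon})\pmod*{\lambda}$, so the congruence extends to all Fourier coefficients; the $k=2$, $\varepsilon=-1$ case is handled by a minor variant of the same computation.

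The main obstacle I foresee is the filtration step in the second paragraph: without $k\not\equiv 0\pmod*{\ell-1}$, multiplication by the Hasse invariant could realize $E_{k,p}^{\varepsilon}\pmod*{\ell}$ as a mod-$\ell$ form of strictly smaller weight---in the degenerate case where $\chi_\ell^{k-1}=1$ and the residual representation $1\oplus \chi_\ell^{k-1}$ becomes $1\oplus 1$---and the Deligne--Serre lift could then fail to land in the weight-$k$ cuspidal space at level $p$. The other hypothesis, $\ell\nmid \frac{B_k}{2k}$, plays a dual role: it forces the cuspidality condition $\ell\mid (\varepsilon+p^{k/2})$ and simultaneously rules out competing level-one congruences via Herbrand--Ribet, so that both hypotheses are sharply needed for the argument.
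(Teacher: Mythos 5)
Your proof takes essentially the same route as the paper's: the paper invokes its Theorem 4.1 (which is precisely your first paragraph plus the Deligne--Serre lift: the hypothesis $\ell\mid(\varepsilon+p^{k/2})$ makes $\overline{E}_{k,p}^{\varepsilon}$ a mod-$\ell$ cuspidal eigenform, which lifts to a characteristic-zero normalized eigenform $h\in S_k(p)$ congruent to $E_{k,p}^{\varepsilon}$), and then argues, exactly as you do, that the hypotheses $\ell\nmid\frac{B_k}{2k}$ and $k\not\equiv 0\pmod*{\ell-1}$ prevent $h$ from being a $p$-oldform, since otherwise one would have a level-one eigenform $g\equiv E_k\pmod*{\Lambda}$, forcing $\ell\mid\frac{B_k}{2k}$.

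Two points of imprecision are worth flagging, though neither is a fatal gap. First, the paper obtains the contradiction $\ell\mid\frac{B_k}{2k}$ from \cite[Chapter~X, Theorem~8.4]{lang}, which is a statement about the structure of mod-$\ell$ modular forms of level one (the constant term of a weight-$k$ form whose positive Fourier coefficients vanish mod $\ell$ must itself vanish when $k\not\equiv 0\pmod*{\ell-1}$, since a nonzero constant has filtration $0$). You cite ``Herbrand--Ribet''; that theorem concerns $\ell\mid B_k$ versus nontriviality of a class-group eigenspace, and while one can splice together Ribet's lemma (Eisenstein congruence $\Rightarrow$ class-group element) with Herbrand's direction (class-group element $\Rightarrow$ $\ell\mid B_k$) to reach the same conclusion, this is a substantially heavier route, with extra local ramification conditions to verify, than the direct filtration argument — which you in fact already describe in your own words and which is the right tool here. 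Second, your final step asserts $a_p(f)=-\varepsilon p^{k/2-1}$ before the $W_p$-eigenvalue of $f$ has been pinned down; this is what needs to be \emph{derived}, not assumed. The paper's Case~(i) derives it correctly: since the congruence from Deligne--Serre holds at $n=p$ as well (via $U_p$), one has $-\delta p^{k/2-1}=a_f(p)\equiv 1+p^{k-1}+\varepsilon p^{k/2}\equiv -\varepsilon p^{k/2-1}\pmod*{\lambda}$, whence $\delta=\varepsilon$. Your alternative of putting $W_p$ directly into the commuting family fed to Deligne--Serre does work (its eigenvalue is forced to lift to $\varepsilon$ exactly, since $\varepsilon\in\{\pm1\}$ and $\ell$ is odd), but then you should make that the argument for the sign, rather than the circular-looking sentence you wrote.
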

		
	Our proof of the above theorems	uses some classical results from the theory of mod $\ell$ modular forms  and Deligne's theorem on Galois representations attached to eigenforms. 
	Further, it is based on the ideas used in \cite{dufr}, is quite classical in nature, and  completely avoids the use of period polynomials. More precisely, we first prove that the assumptions on $\ell$ ensure that the reduction of $E_{k,p}^{\varepsilon}$ modulo $\ell$ is a cuspidal eigenform in characteristic $\ell,$ and then using the Deligne-Serre lifting lemma we lift it to an eigenform in characteristic zero. In
	the final step we apply the Diamond-Ribet level raising theorem and a result of Langlands to obtain the desired newform.
	It is also worth pointing out that our
	method of proof enables us to obtain a refinement of the celebrated
	Diamond-Ribet level raising theorem (for a precise statement see
	\thmref{refine}).
	
	{We remark that the main underlying ideas in our work are similar to that of  Billerey-Menares \cite{bime}. However, because we are only working over forms of prime level in contrast to squarefree level, we are able to consider more appropriate Eisenstein series $E_{k,p}^\varepsilon$ (cf. \cite[\S 1.2.2]{bime}), resulting an improvement over \cite{bime} and allowing us to take $\ell\ge k-1$. 
	 }


	Next using some elementary ideas, we establish the following result in which
	the resulting cusp form may not be an eigenform as 
	before, but it will always have rational Fourier coefficients. 
	\begin{thm}\label{main2}
		Let $k\ge 2$ be an even integer, $p$ a prime  and $\varepsilon \in \{\pm  1\}$. 
		If $k=2$, we
	also	assume that $\varepsilon=-1.$  Let $N^{\varepsilon}_{k, p}$ be the reduced numerator of ${\frac{B_k}{2k}(\varepsilon+  p^{k/2})}$. Suppose 
		that at least one of the following conditions hold:
		\begin{itemize}
			\item[{\rm (a)}]
			$ p\in \{2, 3, 5, 7, 11, 13, 17, 19, 23, 29,31, 41, 47, 59, 71 \}$.
			\item[{\rm (b)}]
			$k\ge 8,$ $k \equiv 1 - \varepsilon\pmod* 4$ 
			and $N_{k, p}^{-1}$ is coprime to $p-1$.	 	
			\item[{\rm (c)}]
			$k\ge 10$ and $k \equiv 1 - \varepsilon\pmod* {10},$ and $N^{\varepsilon}_{k, p}$ is coprime to $(p+\varepsilon)p(p+1).$ 
		\end{itemize}
	{If the space $S_k^{\varepsilon}(p)$ is non-trivial,} then there exists a non-zero cusp form $f\in S_k^{\varepsilon}(p)$ with rational Fourier coefficients such that
		$$f\equiv E_{k, p}^\varepsilon \pmod*{N_{k, p}^\varepsilon}.$$
	\end{thm}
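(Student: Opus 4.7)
\textbf{Proof plan for Theorem \ref{main2}.}

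Write $\frac{B_k}{2k}(\varepsilon + p^{k/2}) = N/D$ in lowest terms with $N = N_{k,p}^\varepsilon$, so that $\gcd(N,D)=1$. The Fourier expansion of $E_{k,p}^\varepsilon$ has integer coefficients for $n\ge 1$, while the constant term is $-\varepsilon N/D$. The whole argument reduces to the following single construction: in each of the cases (a), (b), (c), produce an auxiliary form $\mathcal F\in M_k^\varepsilon(p)$ with (i) Fourier coefficients in $\mathbb Z_{(N)}$ and (ii) constant term $c_0$ which is a unit of $\mathbb Z_{(N)}$, i.e.\ coprime to $N$.

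Given such an $\mathcal F$, the plan is to set
\[
f := E_{k,p}^\varepsilon + \frac{\varepsilon N}{D\,c_0}\,\mathcal F.
\]
By construction $f$ has vanishing constant term, so $f\in S_k^\varepsilon(p)$, and its Fourier coefficients are rational. For $n\ge 1$, since $\gcd(D,N)=1$ and $c_0\in \mathbb Z_{(N)}^\times$, the scalar $\frac{\varepsilon N}{Dc_0}$ lies in $N\cdot \mathbb Z_{(N)}^\times$, so $\frac{\varepsilon N}{Dc_0}a_n(\mathcal F)\in N\mathbb Z_{(N)}$, giving $f\equiv E_{k,p}^\varepsilon \pmod*{N}$. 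Non-vanishing of $f$ is automatic: if $f=0$, then $\mathcal F=\lambda E_{k,p}^\varepsilon$ with $\lambda=a_1(\mathcal F)\in\mathbb Z_{(N)}$ (since $a_1(E_{k,p}^\varepsilon)=1$), but then $c_0=-\varepsilon \lambda N/D$ has $v_\ell(c_0)\ge v_\ell(N)>0$ at every prime $\ell\mid N$, contradicting (ii). Note also that the hypothesis $S_k^\varepsilon(p)\neq 0$ is precisely what makes the target space $M_k^\varepsilon(p)$ at least two-dimensional, so that an $\mathcal F$ satisfying (ii) is not forced to be a scalar multiple of $E_{k,p}^\varepsilon$.

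So all the work is in exhibiting $\mathcal F$ in each case. For cases (b) and (c), the residue conditions on $k$ are designed so that a product of classical Eisenstein series of the right type lies in $M_k^\varepsilon(p)$: for (b), with $k\equiv 1-\varepsilon\pmod 4$, a product involving $E_{2,p}^{-1}$ (constant term $(p-1)/24$) and level-1 factors (or $W_p$-symmetrized versions such as $E_k(z)\pm p^{k/2}E_k(pz)$) of the appropriate weight gives a form whose constant term is a rational number whose numerator consists only of factors of $p-1$ and of denominators of level-1 Eisenstein series; for (c), with $k\equiv 1-\varepsilon\pmod{10}$, an analogous construction of weight 10 (or higher) produces a constant term built out of the factors $(p+\varepsilon)$, $p$, $p+1$ and small denominators. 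The coprimality hypotheses in (b) and (c) are exactly what is needed to conclude that this constant term is a $\mathbb Z_{(N)}$-unit.

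The main obstacle is case (a). Here one does not have an arithmetic coprimality hypothesis to invoke; instead one must, for each of the fifteen listed small primes $p$, pick a concrete $\mathcal F\in M_k^\varepsilon(p)$ (assembled from an explicit integral basis of $M_k(\Gamma_0(p))$ in terms of eta-quotients, $E_4$, $E_6$, and the level-$p$ Eisenstein series) and verify, by an elementary lower bound on the size of prime divisors of $N_{k,p}^\varepsilon$, that the constant term of $\mathcal F$ is coprime to $N$. The verification is essentially bookkeeping once the forms are written down, but it has to be carried out prime by prime.
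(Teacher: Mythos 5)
Your scaffolding is exactly the paper's: in each case construct an auxiliary form $\mathcal F\in M_k^\varepsilon(p)$ with $N$-integral coefficients and constant term a unit modulo $N$, then set $f = E_{k,p}^\varepsilon - \frac{a_0(E_{k,p}^\varepsilon)}{c_0}\mathcal F$ so that $f$ is a cusp form congruent to $E_{k,p}^\varepsilon$ mod $N$. Your non-vanishing argument is actually a cleaner observation than the paper's: the authors verify $f\neq 0$ in cases (b) and (c) by explicitly computing the $q$-coefficient of $f$ and showing it is nonzero, whereas your $\ell$-adic valuation argument makes non-vanishing automatic whenever $N>1$ (for $N=1$ you still need to choose $\mathcal F$ not proportional to $E_{k,p}^\varepsilon$, which is possible precisely because $S_k^\varepsilon(p)\neq 0$). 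That is a genuine simplification.

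However, the construction of $\mathcal F$ --- which is where all the content lives --- is incomplete, and in cases (b), (c) it is set up in a way that would actually fail. You propose building $\mathcal F$ from $E_{2,p}^{-1}$ and from ``level-1 factors such as $E_k(z)\pm p^{k/2}E_k(pz)$,'' and acknowledge that the constant term will then pick up denominators coming from Bernoulli numbers. This is not a cosmetic issue: the hypotheses of (b) and (c) impose coprimality of $N$ with $p-1$, $p$, $p+1$, $p+\varepsilon$ only, not with the numerator of $B_k/(2k)$; indeed $N$ typically \emph{does} share factors with those Bernoulli denominators (that is the whole point of the theorem when $\ell\mid B_k/(2k)$). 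The paper avoids this by using the normalization $G_\alpha = 1 - \frac{2\alpha}{B_\alpha}\sum_{n\ge 1}\sigma_{\alpha-1}(n)q^n$, which has integral coefficients and constant term exactly $1$, together with $G_{2,p}^- = G_2(z)-pG_2(pz)$ (constant term $1-p$, integral); then for $k\equiv 1-\varepsilon\pmod 4$, $k\ge 8$, one writes $k-(1-\varepsilon)=8a+12b$ and takes $\mathcal F = (G_{2,p}^-)^{(1-\varepsilon)/2}(G_4(z)G_4(pz))^a(G_6(z)G_6(pz))^b$, and similarly in case (c) with $(p^2G_4(pz)G_6(z)+p^3G_4(z)G_6(pz))^\alpha$ producing constant term $(p^2+p^3)^\alpha$. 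The residue conditions mod $4$ and mod $10$ are exactly what make these weight bookkeepings possible; the coprimality hypotheses are tuned to the resulting constant terms $1$, $1-p$, $(p^2+p^3)^\alpha$, $(1-p)(p^2+p^3)^\alpha$. Your plan must be restated with these normalizations or the argument that $c_0\in\mathbb Z_{(N)}^\times$ breaks.

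Case (a) is where you have a genuine gap rather than a fixable imprecision. You propose an ad hoc, prime-by-prime construction of $\mathcal F$ followed by ``bookkeeping,'' but the primes listed are not arbitrary small primes: they are exactly those for which the Fricke group $\Gamma_0(p)^+$ has genus zero, and the substantive input the paper uses is that for these primes (and for every even $k$) the spaces $M_k^{\pm}(p)$ admit a Victor--Miller basis $\{f_0,\ldots,f_d\}$ with $f_j = q^j + O(q^{d+1})$ and integer Fourier coefficients (Choi--Kim for $\varepsilon=+1$, Choi--Kim--Lee for $\varepsilon=-1$). One then simply takes $\mathcal F = f_0$, whose constant term is $1$; there is no bookkeeping to do, and no ``lower bound on prime divisors of $N_{k,p}^\varepsilon$'' is involved. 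Without this structural fact a prime-by-prime verification would have to be done for each weight $k$ too, and is not a proof. You should cite and use the existence of this integral echelon basis for genus-zero Fricke level.
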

	\begin{rmk}
		{\rm The primes $p$ in (a) are exactly those for 
			which the genus of the
			Fricke group of level $p$ is zero. They are
			also precisely the prime factors of  
			$2^{46}\cdot 3^{20}\cdot 5^9\cdot 7^6\cdot 11^2\cdot 13^3\cdot 17 \cdot19 \cdot23 \cdot29 \cdot31 \cdot41\cdot 47\cdot59\cdot71,$ the order of the Monster
			group. That is not a coincidence! 
			For more details, see, e.g., Gannon \cite{terry0,terry}}.
	\end{rmk}
{Using \thmref{main2} one can prove 
	\thmref{eigen}. Even though its assertion is weaker than Theorems \ref{main} and \ref{rmain}, we stress that 	the significance of \thmref{eigen} is that it is less restrictive and it gives congruences even for small primes, namely for $\ell=2$ and $3$.}
	The proof 
	is completely patterned after the proof of 	\cite[Lemma 2.1, Theorem 2]{dagu} and so we omit it here. 
	It rests on the fact that the set 
	$$
	\mathcal{B}:=\{f+\varepsilon f|{W_p} : f\in S_k(1)  ~\text{is a normalized eigenform} \} \cup \{g: g \in S_k^\varepsilon(p)~\text{newform}  \}
	$$ 
	forms a basis of $S_k^{\varepsilon}(p)$ consisting of normalized Hecke eigenfunctions for all $T_q$ for $q\neq p$.
	\begin{thm}\label{eigen}
		Suppose at least one of the conditions $(a), (b),  (c)$ of \thmref{main2} holds.       
		Let $\mathcal{B}$ be a basis of $S_k^{\varepsilon}(p)$ of
		normalized Hecke eigenfunctions for all $T_q$ for $q\neq p$. Suppose some prime ideal $\lambda$  divides 
		$N_{k, p}^{\varepsilon}$ in the coefficient field $\mathbb{Q}(a_f(q): f\in \mathcal{B},\,q\neq p)$. Then
		there exists a cusp form $f=\sum_{n\ge 1}a_{f}(n)e^{2\pi i nz} \in \mathcal{B},$ such that for all integers
		$n$ coprime to $p,$ we have
		$$a_{f}(n)\equiv \sigma_{k-1}(n) \pmod*{\lambda}.$$
	\end{thm}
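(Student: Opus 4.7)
The plan is to bootstrap from Theorem \ref{main2}: it produces a non-zero cusp form $h\in S_k^{\varepsilon}(p)$ with rational Fourier coefficients satisfying $h\equiv E_{k,p}^{\varepsilon}\pmod{N_{k,p}^{\varepsilon}}$, and the task is then to locate, within the eigenform basis $\mathcal{B}$, a single element whose mod $\lambda$ system of Hecke eigenvalues coincides with that of $E_{k,p}^{\varepsilon}$. A first elementary observation is that $E_{k,p}^{\varepsilon}$ is itself a common Hecke eigenform for every $T_q$ with $q\neq p$, with eigenvalue $\sigma_{k-1}(q)$; this is immediate from the multiplicativity of $\sigma_{k-1}$ and the standard Hecke formula applied to its Fourier expansion. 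Together with Theorem \ref{main2}, this gives
\[
T_q h\equiv \sigma_{k-1}(q)\,h\pmod{\lambda}\qquad (q\neq p).
\]

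Writing $h=\sum_{f\in \mathcal{B}} c_f f$ with $c_f$ in the coefficient field $K$, and using $T_q f = a_f(q) f$, this rearranges as
\[
\sum_{f\in \mathcal{B}} c_f\bigl(a_f(q)-\sigma_{k-1}(q)\bigr) f\equiv 0\pmod{\lambda}.
\]
Since the first Fourier coefficient of $E_{k,p}^{\varepsilon}$ equals $1$, the reduction $\bar h$ modulo $\lambda$ is non-zero. Following the Datskovsky-Guerzhoy strategy, I would then invoke the semilocal decomposition of the Hecke subalgebra $\mathbb{T}_0\otimes \mathcal{O}_{K,\lambda}$ generated by $\{T_q : q\neq p\}$ into a product of local factors indexed by mod $\lambda$ eigensystems. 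On any local component whose associated mod $\lambda$ eigensystem differs from the Eisenstein one at some prime $q\neq p$, the operator $T_q-\sigma_{k-1}(q)$ is a unit (a unit plus a nilpotent) and therefore kills $\bar h$ on that component. Since $\bar h\neq 0$, at least one local component must correspond to the Eisenstein mod $\lambda$ eigensystem, which forces the existence of some $f_0\in \mathcal{B}$ with $a_{f_0}(q)\equiv \sigma_{k-1}(q)\pmod{\lambda_0}$ for every $q\neq p$, where $\lambda_0$ lies above the same rational prime as $\lambda$; the stability of $\mathcal{B}$ under Galois conjugation then allows us to replace $\lambda_0$ by $\lambda$.

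Extending the congruence from primes $q\neq p$ to all $n$ coprime to $p$ is immediate from the multiplicativity of $\sigma_{k-1}$ and of the restriction of $n\mapsto a_{f_0}(n)$ to $(n,p)=1$, via the Hecke recursion at good primes. The principal technical obstacle will be the semilocal bookkeeping in the middle step: rigorously isolating a single Eisenstein local component requires combining the rationality of $h$ afforded by Theorem \ref{main2} with the decomposition of $\mathbb{T}_0\otimes\mathcal{O}_{K,\lambda}$, and tracking that at least one $c_{f_0}$ remains a $\lambda$-adic unit in the appropriate localization. This is precisely the step that mirrors the level-one argument of Datskovsky-Guerzhoy \cite{dagu}, so we would follow their presentation closely.
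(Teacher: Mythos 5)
Your proposal is correct and follows essentially the route the paper has in mind: the authors explicitly omit the proof and point to the Datskovsky--Guerzhoy argument \cite{dagu}, which, just as you do, starts from the rational cusp form $h$ supplied by Theorem~\ref{main2}, uses that $E_{k,p}^{\varepsilon}$ is a common eigenform for all $T_q$ with $q\ne p$, and isolates a member of the Hecke eigenbasis $\mathcal{B}$ realizing the Eisenstein eigensystem modulo $\lambda$ by examining how the Hecke algebra acts on the non-zero reduction $\bar h$. One minor remark: since your semilocal analysis is already carried out over $\mathcal{O}_K$ localized at the given prime $\lambda$, the minimal prime contained in the Eisenstein maximal ideal directly yields $a_{f_0}(q)\equiv\sigma_{k-1}(q)\pmod{\lambda}$ for all $q\ne p$, so the closing Galois-conjugation step passing from $\lambda_0$ to $\lambda$ is in fact superfluous.
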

	As an application of our results (especially of \thmref{eigenform}), we give a non-trivial lower bound of the  degree of the number field generated by all normalized eigenforms (and newforms) in the space $S_k(p)$, see Section \ref{application}. These bounds improve a similar result of \cite{bime} and are  valid in a subset of the primes with natural density close to one. 
	\subsection{Quantification of Fourier 
		coefficient non-divisibility}
	\label{quanti}
	The second goal of this article is to quantify
	how often $\ell\nmid a(n)$ for certain prime numbers $\ell$ 
	and Fourier coefficients $a(n).$
	This problem
	was first considered by Ramanujan  for his tau function (in \cite{beon} that remained unpublished
	for many years). He made various claims of the form
	\begin{equation}
		\label{valseanalogie}
		\sum_{n\le x,\,\ell\nmid \tau(n)}1=C_{\ell}\int_2^x \frac{dt}{(\log t)^{\delta_{\ell}}}+O\left(
		\frac{x}{(\log x)^r}
		\right),
	\end{equation}
	that he thought to be valid for arbitrary $r$. 
	For example, he claimed
	\eqref{valseanalogie} for $\ell=691$ and $\delta_{\ell}=1/690$.
	Partial integration gives
	\begin{equation}
	\label{partial}
	C_{\ell}\int_2^x\frac{dt}{(\log t)^{1/\delta_{\ell}}}=\frac{C_{\ell} \,x}{(\log x)^{1/\delta_{\ell}}}\left(1+
	\frac{1}{\delta_{\ell}\log x}
	+O_{\ell}\left(\frac{1}{(\log x)^2}\right)\right).
	\end{equation}
	The functions $$\frac{C_{\ell} \,x}{(\log x)^{1/\delta_{\ell}}}\quad\text{and}\quad C_{\ell}\int_2^x\frac{dt}{(\log t)^{1/\delta_{\ell}}},$$
	are now called the \emph{Landau approximation}, respectively \emph{Ramanujan approximation} of
	the counting function in \eqref{valseanalogie}, the true behavior of which is, see Serre \cite{ser76}, 
	\begin{equation}
	\label{true}
	\sum_{n\le x,\,\ell\nmid \tau(n)}1=\frac{C_{\ell} \,x}{(\log x)^{1/\delta_{\ell}}}\left(1+
	\frac{1-\gamma_{\tau;\ell}}{\delta_{\ell}\log x}
	+O_{\ell}\left(\frac{1}{(\log x)^2}\right)\right),
		\end{equation}
	with $\gamma_{\tau;\ell}$ a constant sometimes called \emph{Euler-Kronecker constant}.
	Note that if $\gamma_{\tau;\ell}>1/2$ the Landau approximation asymptotically gives a better approximation to the sum on the left of \eqref{true} than the Ramanujan approximation, and that if 
	$\gamma_{\tau;\ell}<1/2$  it is
	the other way around. Comparing \eqref{partial} and \eqref{true} we see that 
	Ramanujan's claim \eqref{valseanalogie} entails $\gamma_{\tau;\ell}=0.$ For
	$\ell=3,5,7,23$ and $691$ this was disproved by Moree \cite{mor}. For the true value
	of these numerical constants see Table 1 
	(data taken from \cite{clm}).
	
	
	\centerline{{\bf Table 1:} Euler-Kronecker constants $\gamma_{\tau;\ell}$}
	\label{tab:LvRold}
	\begin{center}
		\begin{tabular}{|c|c|c|}\hline
			$\ell$ & $\gamma_{\tau;\ell}$  & winner \\ \hline \hline
			$3$ & $+0.534921\ldots$ & Landau \\ \hline
			$5$ & $+0.399547\ldots$ & Ramanujan \\ \hline
			$7$ & $+0.231640\ldots$ & Ramanujan \\ \hline
			$23$ & $+0.216691\ldots$ & Ramanujan \\ \hline
			$691$ & $+0.571714\ldots$ & Landau \\ \hline
		\end{tabular}
	\end{center}
	Let $\ell$ be an odd prime.  An arithmetic function $\mathfrak f$ assuming 
	integer values  has 
	a \emph{refined $\ell$-non-divisibility asymptotic} with Euler-Kronecker constant 
	$\gamma_{\mathfrak{f};\ell}$ if there exist
	positive constants $C_{\ell}$ and $h_1$ such that
	\begin{equation}
		\label{refined}
		\sum_{n\le x,\,\ell\nmid {\mathfrak f}(n)}1=\frac{C_{\ell} \,x}{(\log x)^{1/h_1}}\left(1+
		\frac{1-\gamma_{\mathfrak f;\ell}}{h_1 \log x}
		+o_{\mathfrak f}\left(\frac{1}{\log x}\right)\right),
	\end{equation}
	where the implicit constant in the error term may depend on ${\mathfrak f}.$
	\begin{thm}	
		\label{conditionA} Let $\mathfrak{f}$ be an integer valued multiplicative function.
		If
		\begin{equation}
			\label{primecondition}
			\#\{p_1\le x:\,p_1\text{~prime~and~}\ell\mid {\mathfrak f}(p_1)\}=\delta~\sum_{p_1\le x}1+O_{\mathfrak f}\bigg(\frac{x}{
				(\log x)^{2+\rho}}\bigg),
		\end{equation}
		 for some real numbers $\rho>0$ and $0<\delta<1,$ 
		then \eqref{refined} holds with $h_1=1/\delta$ for some 
		positive constant $C_{\ell}.$
	\end{thm}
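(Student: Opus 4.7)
The plan is to derive the refined asymptotic \eqref{refined} by the Selberg--Delange method applied to the indicator function of $\ell$-non-divisibility of $\mathfrak{f}$. I first observe that, since $\ell$ is prime and $\mathfrak{f}$ is multiplicative, the function
\[
g(n) := \mathbf{1}[\ell \nmid \mathfrak{f}(n)]
\]
is itself multiplicative: for coprime $m$ and $n$ one has $\mathfrak{f}(mn)=\mathfrak{f}(m)\mathfrak{f}(n)$, so $\ell\mid\mathfrak{f}(mn)$ if and only if $\ell$ divides one of the two factors. Consequently the sum on the left of \eqref{refined} equals $\sum_{n\le x}g(n)$, and I am led to study the Dirichlet series
\[
F(s) = \sum_{n\ge 1} \frac{g(n)}{n^s} = \prod_{p} \Big(1 + \sum_{a\ge 1} \frac{g(p^a)}{p^{as}}\Big),
\]
which converges absolutely for $\Re s>1$. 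At a prime $p$ with $\ell\nmid\mathfrak{f}(p)$ the local factor equals $(1-p^{-s})^{-1}\bigl(1+O(p^{-2\Re s})\bigr)$, while at the remaining primes it is $1+O(p^{-2\Re s})$.

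The next step is to factor $F(s)=\zeta(s)^{1-\delta}H(s)$ and verify that $H(s)$ is holomorphic and non-vanishing in a Hankel-type neighbourhood of $s=1$ lying inside the classical zero-free region of $\zeta$. Taking logarithms,
\[
\log F(s) - (1-\delta)\log\zeta(s) = \sum_{p} \frac{g(p) - (1-\delta)}{p^s} + A(s),
\]
where $A(s)$, collecting the contribution of prime powers of exponent $\ge 2$, is absolutely convergent for $\Re s>1/2$. Setting $\pi(y):=\sum_{p\le y}1$, $\pi_\ell(y):=\#\{p\le y:\ell\mid \mathfrak{f}(p)\}$ and $R(y):=\delta\pi(y)-\pi_\ell(y)$, the hypothesis \eqref{primecondition} yields $R(y)\ll_{\mathfrak{f}} y/(\log y)^{2+\rho}$, and Abel summation converts the prime sum into
\[
\sum_{p} \frac{g(p) - (1-\delta)}{p^s} = s\int_{2}^{\infty} \frac{R(y)}{y^{s+1}}\, dy,
\]
which is analytic on $\Re s\ge 1$ thanks to the power saving $\rho>0$. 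Combining this with the usual analytic continuation of $\log\zeta(s)$ around $s=1$ then provides the required holomorphy and polynomial growth for $H(s)$.

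With this factorization in hand I would invoke the Selberg--Delange theorem to obtain the expansion
\[
\sum_{n\le x}g(n) = \frac{x}{(\log x)^{\delta}}\left(\alpha_0 + \frac{\alpha_1}{\log x} + o_{\mathfrak{f}}\bigg(\frac{1}{\log x}\bigg)\right),
\]
with $\alpha_0 = H(1)/\Gamma(1-\delta)$ and $\alpha_1$ a computable combination of $H(1)$, $H'(1)$ and the derivatives of $1/\Gamma$ at $1-\delta$. Setting $C_\ell := \alpha_0$ and defining $\gamma_{\mathfrak{f};\ell}$ by the relation $\alpha_1 = \delta(1-\gamma_{\mathfrak{f};\ell})\alpha_0$ then converts this into exactly \eqref{refined} with $h_1=1/\delta$.

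The main obstacle I anticipate is the precise tracking of the analytic continuation of $H$ across $\Re s=1$, since the Perron contour has to be deformed onto a Hankel path around $s=1$ inside the zero-free region of $\zeta$. Note that a one-term asymptotic of order $x(\log x)^{-\delta}$ would already follow from a weaker $O(x/(\log x)^{2})$ error in \eqref{primecondition}; it is exactly the second-order refinement with the sharp $o(1/\log x)$ remainder in \eqref{refined} that forces the extra saving $\rho>0$. Once the analytic set-up is in place, the identification of the constants $\alpha_0$ and $\alpha_1$ in terms of $H$ and $\Gamma$ should be a routine computation.
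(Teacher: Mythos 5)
The paper does not actually supply a proof of this theorem: it is stated as an ingredient, and the reader is (implicitly) sent to Ciolan--Languasco--Moree~\cite{clm} for the details, so I cannot compare your proposal against the authors' own argument line by line. That said, your set-up is the expected one: $g=\mathbf{1}[\ell\nmid\mathfrak f]$ is multiplicative because $\ell$ is prime, the Dirichlet series $F(s)=\sum g(n)n^{-s}$ factors through $\zeta(s)^{1-\delta}$, and the hypothesis~\eqref{primecondition} is fed in via Abel summation to control the auxiliary factor $H(s)=F(s)\zeta(s)^{-(1-\delta)}$ near $s=1$. The exponent bookkeeping ($h_1=1/\delta$, $\alpha_1=\delta(1-\gamma_{\mathfrak f;\ell})\alpha_0$) is also correct.

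The genuine gap is exactly at the place you flag as ``the main obstacle.'' The Abel-summation identity
\[
\sum_{p}\frac{g(p)-(1-\delta)}{p^s}=s\int_2^\infty\frac{R(y)}{y^{s+1}}\,dy,\qquad R(y)\ll_{\mathfrak f}\frac{y}{(\log y)^{2+\rho}},
\]
gives holomorphy of this prime sum, and hence of $\log H(s)$, only on the closed half-plane $\Re s\ge 1$; the integral diverges as soon as $\Re s<1$. But the classical Selberg--Delange theorem you propose to invoke requires $H$ to extend holomorphically, with polynomial growth, into a region of the shape $\{\sigma>1-c/\log(2+|\tau|)\}$ strictly to the \emph{left} of the line, precisely so that a truncated Hankel contour around $s=1$ can be drawn inside it. Under the hypothesis of the theorem (a power-of-$\log$ saving only) no such continuation is available, so the version of Selberg--Delange you cite cannot be applied. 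What is needed instead is a Tauberian-type or boundary-data version of the Landau--Selberg--Delange machinery that extracts the two-term expansion from the behaviour of $F$ on $\Re s\ge 1$ together with the smoothness supplied by the extra $\rho>0$ (the integral for $\partial_s\log H$ on $\Re s=1$ still converges, so $\log H$ is $C^1$ there, and that is what controls the secondary coefficient). Such variants exist in the literature and are presumably what \cite{clm} use, but your proposal stops at the point where they would have to be invoked; as written the proof does not go through.
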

	\begin{cor}
		Let	$m\ge 1$ be an integer and $\ell$ an
		odd prime such that $h_2:=(\ell-1)/gcd(\ell-1,m)$ is even.
		Then the $m$-th sum of divisors function $\sigma_m$ has 	
		a refined $\ell$-non-divisibility asymptotic with
		$h_1=h_2$ for some 
		positive constant $C_{\ell}.$ 
	\end{cor}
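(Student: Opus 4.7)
The plan is to apply \thmref{conditionA} directly with $\mathfrak{f}=\sigma_m$, so the main task is to verify the prime condition \eqref{primecondition} with $\delta = 1/h_2$. Since $\sigma_m$ is multiplicative, I only need to understand $\sigma_m(p) = 1 + p^m$ at primes. For primes $p \ne \ell$, the condition $\ell \mid \sigma_m(p)$ is equivalent to $p^m \equiv -1 \pmod*{\ell}$, a purely residue-theoretic condition depending only on $p \pmod*{\ell}$.

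Next I would determine the set $S_{\ell,m} = \{\,a \in (\Z/\ell\Z)^* : a^m \equiv -1 \pmod*{\ell}\,\}$. Because $(\Z/\ell\Z)^*$ is cyclic of order $\ell-1$, the image of the $m$-th power map is the unique subgroup of order $h_2 = (\ell-1)/\gcd(\ell-1,m)$, while its kernel has size $\gcd(\ell-1,m)$. Thus $S_{\ell,m}$ is either empty or a coset of this kernel, so $|S_{\ell,m}| \in \{0, \gcd(\ell-1,m)\}$. The element $-1$ is the unique element of order $2$ in $(\Z/\ell\Z)^*$, hence it lies in the (cyclic) subgroup of order $h_2$ if and only if $h_2$ is even. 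Under the hypothesis of the corollary this holds, so $|S_{\ell,m}| = \gcd(\ell-1,m) = (\ell-1)/h_2$. Since $h_2 \ge 2$ we have $0 < 1/h_2 < 1$ as required.

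Now I would invoke the Siegel--Walfisz theorem (or even just the Prime Number Theorem for arithmetic progressions with a classical error term), which gives, for each residue class $a \in (\Z/\ell\Z)^*$,
\[
\#\{p_1 \le x : p_1 \text{ prime}, \ p_1 \equiv a \pmod*{\ell}\} = \frac{\pi(x)}{\ell-1} + O_{\ell,N}\!\left(\frac{x}{(\log x)^N}\right)
\]
for any fixed $N > 0$. Summing over the $|S_{\ell,m}| = (\ell-1)/h_2$ admissible residue classes, and noting that $p = \ell$ contributes at most one term, yields
\[
\#\{p_1 \le x : p_1 \text{ prime}, \ \ell \mid \sigma_m(p_1)\} = \frac{1}{h_2}\,\pi(x) + O_{\ell,N}\!\left(\frac{x}{(\log x)^N}\right).
\]
Taking $N = 3$ (say) gives an error term of quality far exceeding the $O(x/(\log x)^{2+\rho})$ required by \eqref{primecondition}, with $\delta = 1/h_2$.

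The hypotheses of \thmref{conditionA} are therefore satisfied, so \eqref{refined} holds for $\mathfrak{f} = \sigma_m$ with $h_1 = 1/\delta = h_2$ and some positive constant $C_\ell$, which is the claim. There is no real obstacle here beyond the residue-class classification; the only subtle point is verifying that the parity condition in the corollary is exactly the right criterion for $-1$ to lie in the image of the $m$-th power map modulo $\ell$.
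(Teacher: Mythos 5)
Your proof is correct and takes exactly the route the paper intends: the corollary is placed immediately after \thmref{conditionA}, whose proof is "left as an exercise," and the natural way to verify hypothesis \eqref{primecondition} for $\mathfrak{f}=\sigma_m$ is precisely the residue-class analysis you carry out (cyclicity of $(\Z/\ell\Z)^*$, the parity of $h_2$ controlling whether $-1$ is an $m$-th power, and the prime number theorem for arithmetic progressions to fixed modulus $\ell$). No gaps; the only delicate point, as you note, is the group-theoretic criterion for $|S_{\ell,m}|$ being nonempty, which you handle correctly.
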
	
	
	The proof of the corollary is left as an exercise, cf.\,\cite{clm}. 
	We remark that in case $h_2$ is odd, \eqref{refined} takes a more trivial form.
	\begin{thm}\label{non-divisibility}
		Let $k\ge 4$ be an even integer, $p$ a prime and
		$\varepsilon\in \{\pm 1\}$.  
		Let $\ell\ge 5$ be a prime
		such that $\varepsilon p^{k/2}\equiv -1\pmod*{\ell}$.
		Set $r=\text{gcd}(\ell-1,k-1).$ Let $g_1$ be the multiplicative
		order of $p^r$ modulo $\ell.$ Put $\mu_p=\ell$ if $g_1=1$ and
		$\mu_p=g_1$ otherwise.
		Then $\mathfrak{f}(n)=\sigma_{k-1}(n)+\varepsilon p^{k/2}\sigma_{k-1}(n/p)$ has 
		a refined $\ell$-non-divisibility asymptotic \eqref{refined} with 
		$h_1=(\ell-1)/r,$
		and Euler-Kronecker constant
		\begin{equation}
			\label{gammarelator}
			\gamma_{\mathfrak{f};\ell}=\gamma_{\sigma_{k-1};\ell}+\left(\frac{\mu_p}{p^{\mu_p}-1}-\frac{(\mu_p-1)}{p^{\mu_p-1}-1}\right)\log p,
		\end{equation}
		provided that $h_1$ is even.
	\end{thm}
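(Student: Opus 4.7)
The plan is to apply \thmref{conditionA} and then compute the Euler-Kronecker constant via a Dirichlet-series comparison with the known $\sigma_{k-1}$ case.

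First, $\mathfrak{f}$ is multiplicative since the $L$-function $L(E_{k,p}^{\varepsilon},s)=(1+\varepsilon p^{k/2-s})\zeta(s)\zeta(s-k+1)$ admits an Euler product; explicitly, $\mathfrak{f}(q^a)=\sigma_{k-1}(q^a)$ for primes $q\ne p$ and $\mathfrak{f}(p^a)=\sigma_{k-1}(p^a)+\varepsilon p^{k/2}\sigma_{k-1}(p^{a-1})$. The hypothesis $\varepsilon p^{k/2}\equiv -1\pmod{\ell}$ telescopes the latter to $\mathfrak{f}(p^a)\equiv p^{a(k-1)}\pmod{\ell}$, a unit. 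So $\ell\nmid\mathfrak{f}(p^a)$ for every $a\ge 0$, and $p$ imposes no constraint on the counting problem. For primes $q\ne p$, $\ell\mid\mathfrak{f}(q)=1+q^{k-1}$ iff $q^{k-1}\equiv -1\pmod{\ell}$. Writing $(\Z/\ell)^*=\langle g\rangle$, the image of $x\mapsto x^{k-1}$ is the subgroup of $r$-th powers, and $-1=g^{(\ell-1)/2}$ lies in this subgroup exactly when $r\mid(\ell-1)/2$, i.e., when $h_1=(\ell-1)/r$ is even. Under this parity hypothesis the equation has exactly $r$ solutions mod $\ell$, so Siegel-Walfisz gives a density of $r/(\ell-1)=1/h_1$ with error $O(x/(\log x)^{2+\rho})$. \thmref{conditionA} then yields \eqref{refined} with $h_1=(\ell-1)/r$.

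To pin down $\gamma_{\mathfrak{f};\ell}$, set $F(s)=\sum_{\ell\nmid\mathfrak{f}(n)}n^{-s}$ and $G(s)=\sum_{\ell\nmid\sigma_{k-1}(n)}n^{-s}$. Since $\mathfrak{f}$ and $\sigma_{k-1}$ share all prime-power values away from $p$, the Euler factors of $F$ and $G$ agree outside $p$. At $p$, a short case analysis (splitting on whether $p^{k-1}\equiv 1\pmod{\ell}$) shows that $\ell\mid\sigma_{k-1}(p^a)$ iff $a\equiv -1\pmod{\mu_p}$, while no $a$ is constrained for $\mathfrak{f}$. Summing the two geometric series yields
$$F(s)=G(s)\cdot R_p(s),\qquad R_p(s)=\frac{1-p^{-\mu_p s}}{1-p^{-(\mu_p-1)s}},$$
which is holomorphic and non-vanishing at $s=1$ (since $\mu_p\ge 2$). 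Writing $F(s)=A_F(s)/(s-1)^{1/h_1}$ and $G(s)=A_G(s)/(s-1)^{1/h_1}$ with $A_F,A_G$ analytic and nonzero at $1$, we have $A_F=A_G R_p$. Applying Selberg-Delange as in \cite{clm} and matching the second-order coefficient in \eqref{refined} identifies $\gamma_{\mathfrak{f};\ell}=A_F'(1)/A_F(1)$, and likewise for $\sigma_{k-1}$; subtracting gives
$$\gamma_{\mathfrak{f};\ell}-\gamma_{\sigma_{k-1};\ell}=\frac{R_p'(1)}{R_p(1)}=\left(\frac{\mu_p}{p^{\mu_p}-1}-\frac{\mu_p-1}{p^{\mu_p-1}-1}\right)\log p,$$
the last equality from differentiating $\log R_p(s)=\log(1-p^{-\mu_p s})-\log(1-p^{-(\mu_p-1)s})$ and evaluating at $s=1$.

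The main technical point is this last identification: invoking Selberg-Delange carefully enough to extract $\gamma_{\mathfrak{f};\ell}$ as the logarithmic derivative $A_F'(1)/A_F(1)$, so that replacing one Euler factor by the holomorphic twist $R_p$ propagates exactly as an additive $\log R_p$ shift in $\gamma$. Since \cite{clm} has treated the $\sigma_{k-1}$ case in detail, the work amounts to verifying that a single-factor twist is captured verbatim in their expansion, which is routine within that framework.
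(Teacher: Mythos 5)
Your proposal follows the same route as the paper: establish multiplicativity, observe that the Euler factors of $L_{S_2(p)}$ and $L_{S_1}$ agree away from $p$, compute the local factor at $p$ to get the ratio $R_p(s)=(1-p^{-\mu_p s})/(1-p^{-(\mu_p-1)s})$, and read off $\gamma_{\mathfrak f;\ell}-\gamma_{\sigma_{k-1};\ell}$ from the logarithmic derivative at $s=1$. The only cosmetic difference is that you phrase the last step via Selberg--Delange and $A_F'(1)/A_F(1)$, whereas the paper uses its definition \eqref{EKf} directly on $L'_{S_2(p)}/L_{S_2(p)}$; these compute the same quantity, so the proof is correct and essentially identical.
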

	This result reduces the study of $\gamma_{\mathfrak{f};\ell}$
	to that of $\gamma_{\sigma_{k-1};\ell},$ which was
	studied in extenso by Ciolan et al.\,\cite{clm}. They gave a 
	(long and involved) formula for 
	this Euler-Kronecker constant that
	allows one to evaluate it with a certified accuracy of several decimals. The relevant computer programs are
	made available at \url{www.math.unipd.it/~languasc/CLM.html}.
	
	\subsection{Plan for the remainder of the article}
	In Section \ref{pre}, we revisit some basic preliminaries needed to prove \thmref{main}, such as Hecke operators, Artin-Lehner newforms theory, mod $\ell$ modular forms and $\ell$-adic Galois representations associated to modular forms. 
	In Section \ref{proof_main}, we give a proof of \thmref{main} by assuming 
	\thmref{eigenform} (proven in Section \ref{v-proof} along with a
	variant of it).
	In Section \ref{proof_main2}, we prove \thmref{main2} followed by a discussion of
	several interesting numerical examples in Section \ref{example}.  In Section \ref{application},  as an application of our results, we give a non-trivial lower bound  for the degree of the field of coefficients of any normalized eigenform  of fixed weight and level, 
	with Section \ref{dickman} recalling some relevant 
	results on large prime factors of shifted primes.
	Finally,  in Section \ref{pietersproof}, we 
	prove Theorem \ref{non-divisibility}.

	\section{Required Preliminaries}\label{pre}
	\subsection{Notation}
	The letters
	$p,p_1,q$ and $\ell$ will denote prime numbers throughout, except in Section \ref{example}, 
	where $q=e^{2\pi i z}$. For a rational number $\frac{m}{n}$, by $\ell \mid  \frac{m}{n}$, we mean that $\ell$ divides the reduced numerator of $\frac{m}{n}$. 
	Given a newform $f$ we denote
	its $n$-th Fourier coefficient by $a_f(n)$ and its 
	{\em coefficient field} $\mathbb Q(a_f(n): n\ge 1)$ by $K_f$. We say two forms $f$ and $g$ are {\em congruent} mod $\ell$ or mod $\lambda$ if $a_f(n)$ is congruent to 
	$a_g(n)$ for every integer $n$. 
	For notational convenience, we also abbreviate $M_k^{\pm 1}(p)$, $S_k^{\pm 1}(p)$ and $E_{k,p}^{\pm 1}$ by  $M_k^{\pm }(p)$, $S_k^{\pm }(p)$ and $E_{k,p}^{\pm }$, respectively. 
	\subsection{Atkin-Lehner operators and newform theory}
	Let $M_k(N)$ and $S_k(N)$ be the $\mathbb C$-vector space of modular forms and cusp forms, respectively  of even weight $k\ge 2$ and level $N$ with respect to $\Gamma_0(N)$ of trivial nebentypus. These spaces have actions of the Hecke operators $T_1,T_2,\ldots$ which satisfy the following relations: $T_1=1$, $T_{mn}=T_m T_{n}$ if $(m,n)=1$ and for prime powers $q^r$ 
	with $q\nmid N$ we have the recurrence 
	$$T_{q^r}=T_qT_{q^{r-1}}-q^{k-1}T_{q^{r-2}}.$$
	For a prime $p\mid N$, the action of $T_p$ (we will denote it by $U_p$) on $f\in M_k(N)$ is given by 
	$a_{\substack{\\ T_p f}}(n)=a_f(np)$ and 
	such an operator $T_p$ is generally 
	called an {\em $U_p$ operator}. 
	Next, we recall some newform theory from \cite{atle}. A modular form $f\in M_k(N)$ is called a {\em Hecke eigenform} if it is an eigenfunction for all the Hecke operators  $T_q$ for $(q, N)=1$ and $U_p$ for all $p \mid N$. It is a well-known result that if $f$ is a Hecke cusp 
	eigenform, then $a_f(1)\ne 0.$ We say such $f$ is {\em normalized} if $a_f(1)=1.$
	
	Suppose that $p\mid N,$ but $p^2\nmid N.$
	Then there are two ways to embed $S_k(N/p)$ inside $S_k(N)$; one by the identity and the other $f(z)\mapsto f(pz)$ which give rise to a map 
	$$S_k(N/p)\oplus S_k(N/p)\rightarrow S_k(N)\text{~defined~by~}(f,g)\mapsto f(z)+g(pz).$$ The image of this map is called the {\em space of $p$-oldforms} in $S_k(N)$, and is denoted by $S_k(N)^{p-{\rm old}}$. The  orthogonal complement of $S_k(N)^{p{\rm-old}}$ in $S_k(N)$ with respect to the Petersson inner product is called the {\em space of $p$-newforms}, and denoted by $S_k(N)^{p{\rm-new}}$. Finally, in case $N$ is squarefree, we define the {\em space of newforms} 
	$S_k(N)^{{\rm new}}$ as the intersection $\bigcap\limits_{p\mid N}S_k(N)^{p{\rm-new}}$.  A normalized Hecke eigenform $f$ in $S_k(N)^{{\rm new}}$ is called a {\em newform}. 
	\par Let $W_p$
	be the {\em Atkin-Lehner operator} on $M_k(p)$ defined by
	$$f|{W_p}(z)= p^{-k/2}z^{-k}f\Big(\frac{-1}{pz}\Big).$$
	It preserves the space $M_k(p)$ and $S_k(p)$ and also since it is an involution its 
	eigenvalue $\varepsilon$ is in $\{\pm 1\}$. 
	Next we state some standard facts about 
	the operators $T_q~ (q\neq p),$ $U_p,$ and $W_p$ and newforms for the space $S_k(p)$ that can be, for
	example, found in \cite[Lemma 17, Theorem 5]{atle}.
	\begin{lem}\label{alr}	We have the following.	
		\begin{itemize}
			\item[$(1)$] 
			Both $\{T_q, U_p: q\neq p\}$ and	$\{T_q, W_p: q\neq p\}$ are commutating families of operators. 
			\item[$(2)$]
			$ f \in S_k(p)$ is a newform if and only if it is an eigenfunction for all $T_q ~(q \neq p)$, $U_p$ and $W_p$.
			\item[$(3)$]
			If $f\in S_k(p)^{{\rm new}}$ is a newform with Atkin-Lehner eigenvalue $\varepsilon,$ then $a_f(p)=-\varepsilon p^{k/2-1}$.
		\end{itemize}
	\end{lem}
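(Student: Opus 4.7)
The plan is to invoke classical Atkin--Lehner theory \cite{atle} specialized to prime level; all three assertions are standard consequences, so I shall sketch how each follows rather than reprove the whole theory. For part (1), I would use that $T_q$ (with $q\neq p$) and $U_p$ are associated to the double cosets $\Gamma_0(p)\begin{pmatrix}1&0\\0&q\end{pmatrix}\Gamma_0(p)$ and $\Gamma_0(p)\begin{pmatrix}1&0\\0&p\end{pmatrix}\Gamma_0(p)$ respectively; a direct coset-decomposition computation, using $(q,p)=1$, shows these commute in the Hecke ring, giving $T_qU_p=U_pT_q$. For $W_p$ and $T_q$, I would use that $W_p$ normalizes $\Gamma_0(p)$ and that conjugation by $W_p$ fixes the $T_q$-double coset whenever $q\neq p$, so $W_pT_qW_p^{-1}=T_q$.

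For part (2), the forward direction is built into the definition of a newform, together with the commutation relations from (1) and multiplicity one on $S_k(p)^{\mathrm{new}}$: any simultaneous $\{T_q\}_{q\neq p}$-eigenline in the new part is preserved by both $W_p$ and $U_p$, hence each acts by a scalar. For the converse, I would use the decomposition $S_k(p)=S_k(p)^{\mathrm{new}}\oplus S_k(p)^{p\text{-old}}$, noting that the old part is spanned by pairs $g(z),g(pz)$ with $g\in S_k(1)$ a normalized eigenform; on each such two-dimensional component $U_p$ acts by a non-scalar matrix whose eigenvectors are swapped (up to scalars) by $W_p$. Consequently no nonzero vector in the old part can be a simultaneous eigenfunction for both $U_p$ and $W_p$, forcing any joint eigenfunction for $\{T_q,U_p,W_p\}$ to live in the new subspace, i.e.\ to be a newform.

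Part (3) will follow from the Atkin--Lehner identity $U_p=-p^{k/2-1}W_p$ on $S_k(p)^{\mathrm{new}}$: applied to a newform $f$ with $W_pf=\varepsilon f$, this gives $a_f(p)f=U_pf=-\varepsilon p^{k/2-1}f$. Establishing that identity is the technical core of the lemma and the main obstacle in a self-contained proof; it can be derived by analysing the local representation of $f$ at $p$ (which is Steinberg for newforms of exact level $p$), or equivalently by comparing the Petersson adjoint of $U_p$ with the conjugate $W_pU_pW_p^{-1}$. Since all of this is fully documented in \cite[Lemma 17, Theorem 5]{atle}, I would simply invoke that reference rather than redo the derivation.
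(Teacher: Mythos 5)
The paper does not actually prove Lemma~3.1; it simply records these facts as ``standard'' and cites Atkin--Lehner \cite[Lemma 17, Theorem 5]{atle}. You go further by sketching a derivation, and your sketch is broadly sound: parts (1) and (3) are argued correctly (the identity $a_f(p)=-\varepsilon p^{k/2-1}$ on $S_k(p)^{\rm new}$ is indeed Theorem~3 of \cite{atle}, equivalent to $U_p=-p^{k/2-1}W_p$ on that subspace), and your reduction of part (2) to the claim that no old form is a simultaneous $U_p$- and $W_p$-eigenfunction is the right structure.

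However, the mechanism you offer for that claim is not quite correct. On the two-dimensional span of $g(z),\,g(pz)$ for a level-one eigenform $g$, the $U_p$-eigenlines are spanned by $v_\alpha=g(z)-\beta\,g(pz)$ and $v_\beta=g(z)-\alpha\,g(pz)$, where $\alpha,\beta$ are the roots of $X^2-a_g(p)X+p^{k-1}$, while $W_p$ acts by $g(z)\mapsto p^{k/2}g(pz)$, $g(pz)\mapsto p^{-k/2}g(z)$. A direct computation gives $W_p v_\alpha=-\beta p^{-k/2}\big(g(z)-(p^{k}/\beta)\,g(pz)\big)$; this is proportional to $v_\beta$ only if $\alpha\beta=p^{k}$, whereas in fact $\alpha\beta=p^{k-1}$, so $W_p$ does \emph{not} swap the $U_p$-eigenlines (nor fix them). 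Your conclusion survives --- $W_p v_\alpha$ fails to be a $U_p$-eigenvector at all, hence $v_\alpha$ cannot be a $W_p$-eigenvector --- but ruling out the degenerate case $\beta=\pm p^{k/2}$ (equivalently $a_g(p)=\mp p^{k/2-1}(p+1)$) requires an extra input such as Deligne's bound $|a_g(p)|\le 2p^{(k-1)/2}$, which postdates Atkin--Lehner; their original argument avoids this. Since you ultimately invoke the same reference the paper cites, this is acceptable as a sketch, but the ``swapped'' claim should be corrected.
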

	\subsection{Modular forms with coefficients in a ring A}\label{mr}
	Let $M_k(N,\mathbb Z)\subset \mathbb Z[\![q]\!]$ denote the set of elements of $M_k(N)$ 
	having integer 
	Fourier coefficients at the cusp infinity. For a commutative ring $A$,  we define 
	$$
	M_k(N, A)=M_k(N, \mathbb Z)\otimes_{\mathbb Z}A.
	$$ 
	By the $q$-expansion principle, 
	the map $M_k(N, A)\rightarrow A[\![q]\!]$ is injective, so we may view $M_k(N, A)$ as a submodule of $A[\![q]\!]$.  Note that $S_k(N, \mathbb Z)=M_k(N, \mathbb Z)\cap S_k(N)$. Hence we can define $S_k(N, A)$ similarly, and we identify it with an $A$-submodule of $M_k(N, A)$.
	
	The Hecke operators $T_n$ defined earlier also act on the space $M_k(N, A)$ with 
	the small modification that 
	the action of $T_\ell$ on $M_k(N,A)$ coincides with the action of $U_\ell$ if $A$ is a domain of positive characteristic $r$ and $\ell\mid r.$ 
	\subsection{Mod $\ell$ modular forms}
	For a prime $\ell$, let $\overline{\mathbb{F}}_{\ell}$ denote an algebraic closure of the finite field $\mathbb{F}_{\ell}$, with $\ell$ elements.   In this section we recall the notion of modular forms with coefficients in $\overline{\mathbb F}_{\ell}$ (see  \cite[Section 3.1]{ser87}). 
	
	Fix an embedding $\iota_{\ell}:\overline{\mathbb Q}\hookrightarrow \overline{\mathbb Q}_{\ell}$ in particular we have $\overline{\mathbb Z}\hookrightarrow \overline{\mathbb Z}_{\ell}$.
	 Therefore the ring $\overline{\mathbb Z}_{\ell}$ has a natural reduction map to its residue field
	$\overline{\mathbb F}_{\ell}$ and we obtain a homomorphism 
	\begin{align*}
		\overline{\mathbb Z}_{\ell}&\rightarrow \overline{\mathbb F}_{\ell} {\rm~defined~by~}
		a \mapsto  \overline{a}.
	\end{align*}
	For $k\ge 2$ and an integer $N$, coprime to $\ell$, we define the space of modular forms of type $(N,k)$ with coefficients in $\overline{\mathbb F}_{\ell}$, denoted by $M_k(N, \overline{\mathbb F}_{\ell})$, consisting of formal  power series 
	$$F(z)=\sum_{n\ge 0}A_ne^{2\pi i n z}, ~~A_n \in \overline{\mathbb F}_{\ell},$$
	for which there exists a modular form
	$f(z)=\sum_{n\ge 0}a_ne^{2\pi i n z} \in M_k(N), ~~a_n \in \overline{\mathbb Z},$
	such that $\overline{a}_n=A_n$ for all $n\ge 0$. The space $S_k(N, \overline{\mathbb F}_{\ell})$ is defined analogously. 
	
	As mentioned in Section \ref{mr}, we have the action of the Hecke algebra generated by the operators $T_q$, $q \nmid \ell N$ and $U_{p}$, $p\mid \ell N$ on the space $M_k(N, \overline{\mathbb F}_{\ell})$, they also preserve 
	$S_k(N, \overline{\mathbb F}_{\ell})$.
	Observe that by the Deligne-Serre lifting lemma if 
	$F \in S_k(N, \overline{\mathbb F}_{\ell})$ is a non-zero normalized Hecke eigenform then  $F$ is the reduction$\mod {\ell}$ of some normalized Hecke eigenform	$f\in S_k(N, \overline{\mathbb Z}_{\ell})$.
	
	We say $F \in S_k(N, \overline{\mathbb F}_{\ell})$ is an {\em eigenfunction for the Atkin-Lehner operator} $W_N$ with eigenvalue $\varepsilon$ if it is a reduction of $f \in S_k(N, \overline{\mathbb{Z}}_{\ell})$ which is an eigenfunction for $W_N$ with eigenvalue $\varepsilon$. 
	Notice that this is a well-defined operator on 
	$M_k(N, \overline{\mathbb F}_{\ell})$
	as  we know that if $f$ and $f'$ are characteristic zero modular forms of the same weight and level N  that are congruent modulo $\ell$
	then $W_Nf$ and $W_Nf'$ are congruent modulo $\ell$ as well.

\subsection{Galois representations attached to modular forms}
	In this section, we briefly recall some standard facts  about 2-dimensional Galois representations of Gal$(\overline{\mathbb Q}/\mathbb Q)$ associated to Hecke eigenforms.
	
	Let $f(z)=\sum_{n\ge 1}a_f(n)e^{2\pi i n z}$ be a normalized Hecke eigenform of weight $k$ and level $N$. It is  well-known that the Fourier coefficients $a_f(n)$ belong to the ring of integers $\mathcal{O}_{K_f}$ of a finite extension field $K_f$ of $\mathbb Q$. 
	For a given
	prime $\ell,$ due to a theorem of Deligne, corresponding to such an eigenform $f$ and a prime ideal $\lambda$ above $\ell$ in $K_f$, there is a continuous $\ell$-adic Galois representation 
	$$\rho_{f, \lambda}:{\rm Gal}(\overline{\mathbb Q}/\mathbb Q)\rightarrow GL(2, K_{f,\lambda}),$$
	where $K_{f,\lambda}$ is the completion of $K_f$ at the place $\lambda$. The representation $\rho_{f, \lambda}$ is irreducible, unique up to isomorphism and  it is unramified outside the primes dividing $N$ and the norm of $\lambda$, and has 
	the following properties:
	\begin{center}
		${\rm {tr}}(\rho_{f, \lambda}({\rm {Frob}}_q))=a_f(q)$ and 
		${\rm {det}}(\rho_{f, \lambda}({\rm {Frob}}_q))=q^{k-1},$
	\end{center}
	where Frob$_q\in {\rm Gal}(\overline{\mathbb Q}/\mathbb Q)$ is the Frobenius element at the prime $q$.
	Conjugating by a matrix in $GL(2, K_{f,\lambda})$, one can assume that the image of 
	$\rho_{f, \lambda}$ lands inside $GL(2, \mathcal O_{K_{f,\lambda}})$. Reducing this representation with values in $GL(2, \mathcal O_{K_{f,\lambda}})$ modulo $\lambda$, we get a mod-$\ell$ representation of 
	Gal$(\overline{\mathbb Q}/\mathbb Q)$
	$$\overline{\rho}_{f, \lambda}:{\rm Gal}(\overline{\mathbb Q}/\mathbb Q)\rightarrow GL(2, \mathcal O_{K_f}/\lambda).$$
	The representation $\overline{\rho}_{f, \lambda}$ is well defined up to semi-simplification and depends only on the cusp form $f$ modulo $\lambda.$
	
	\subsection{Diamond-Ribet level raising theorem and a refinement}
	We now recall the following  celebrated result of Ribet \cite{rib} (for weight two and trivial character) and  Diamond \cite{dia} (for higher weight and non-trivial characters), called {\em level raising theorem},
	which gives a criterion for the existence of a congruence between two newforms of the same weight, but 
	of different level. This plays an important role in the proof of our results. 
	\begin{thm}[Diamond-Ribet level raising theorem]\label{level_raising}
		Let $g \in S_k(N)$ be a newform of weight $k \ge 2$ and let $p$ and $\ell$ be distinct primes not dividing $N$ with $\ell \nmid \frac{1}{2}\varphi(N) N p (k-2)!$.  Let $\lambda$ be a prime ideal above $\ell$ in the field generated by the eigenvalues of all eigenforms in $S_k(Np)$ and $S_k(N)$. Then the following are equivalent:
		\begin{itemize}
			\item[$(1)$]
			$a_g(p)^2 \equiv p^{k-2}(1+p)^2  \pmod* \lambda.$ 
			\item[$(2)$]
			There exists a $p$-newform $f\in S_k(Np)$ such that for every prime $q$ coprime
			to $pN,$ 
			$$a_f(q)\equiv a_g(q) \pmod* \lambda.
			$$
		\end{itemize}
	\end{thm}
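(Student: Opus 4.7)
The plan is to prove the two implications separately, by very different methods. The implication $(2)\Rightarrow(1)$ is a short Galois-representation argument. If $f\in S_k(Np)$ is a $p$-newform congruent to $g$ modulo $\lambda$ at all primes $q\nmid pN$, then Chebotarev density applied to the traces of Deligne's representations yields an isomorphism of semisimplifications $\overline{\rho}_{f,\lambda}^{\mathrm{ss}}\simeq \overline{\rho}_{g,\lambda}^{\mathrm{ss}}$. Because $f$ is a $p$-newform, its local Weil--Deligne representation at $p$ is a Steinberg representation, so on the semisimplification $\mathrm{Frob}_p$ has eigenvalues $a_f(p)$ and $p\cdot a_f(p)$ with $a_f(p)=\pm p^{k/2-1}$ by \lemref{alr}(3). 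On the other hand $\overline{\rho}_{g,\lambda}$ is unramified at $p$ (since $p\nmid N$), and $\mathrm{Frob}_p$ has characteristic polynomial $X^2-a_g(p)X+p^{k-1}$. Equating the two eigenvalue multisets modulo $\lambda$ forces $a_g(p)\equiv \pm p^{k/2-1}(1+p)\pmod*{\lambda}$, whence $a_g(p)^2\equiv p^{k-2}(1+p)^2\pmod*{\lambda}$.

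For the harder direction $(1)\Rightarrow(2)$, I would follow Ribet's strategy (for $k=2$), as extended by Diamond to arbitrary even weight. The congruence hypothesis says exactly that the two $\mathrm{Frob}_p$-eigenvalues of $\overline{\rho}_{g,\lambda}$ are in ratio $p^{\pm 1}$ modulo $\lambda$, so locally at $p$ the representation $\overline{\rho}_{g,\lambda}$ resembles the reduction of a Steinberg representation. Let $\mathfrak{m}_g$ be the maximal ideal cut out by $g$ inside the Hecke algebra $\mathbb{T}(N)$ acting on $S_k(N,\overline{\mathbb{F}}_\ell)$, and consider the two degeneracy maps $S_k(N)\oplus S_k(N)\hookrightarrow S_k(Np)^{p\text{-old}}$. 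Ihara's lemma (Ribet in weight $2$, Diamond in general) asserts that the kernel of the mod $\ell$ degeneracy map is supported on Eisenstein primes, so it vanishes after localizing at the non-Eisenstein ideal $\mathfrak{m}_g$. A dimension count comparing $\dim S_k(Np)$ with $2\dim S_k(N)$, together with the local Steinberg-type structure forced by the hypothesis, then produces a non-zero $\mathfrak{m}_g$-component inside the $p$-new part of $S_k(Np,\overline{\mathbb{F}}_\ell)$. Feeding the resulting mod $\lambda$ normalized eigenform into the Deligne--Serre lifting lemma yields the desired characteristic-zero $p$-newform $f\in S_k(Np)$ congruent to $g$ mod $\lambda$ away from $pN$.

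The main obstacle is this last direction, specifically the Ihara-lemma input and the ensuing control of the $p$-new localization. The hypotheses $\ell\nmid \tfrac{1}{2}\varphi(N)Np(k-2)!$ are tailored precisely to this step: $\ell\nmid(k-2)!$ is a Fontaine--Laffaille-type bound that lets mod $\ell$ cusp forms of weight $k$ be handled by crystalline methods, which is what Diamond exploits to extend Ribet's weight-$2$ argument; $\ell\nmid \varphi(N)Np$ rules out Eisenstein contamination and ramification of $\ell$ in the Hecke module and its residual representations; and $\ell\neq 2$ is needed for multiplicity one in the mod $\ell$ cohomology of $X_0(Np)$. Once these are in place, the Eichler--Shimura congruence at $p$ and the component-group analysis of the special fibre of $J_0(Np)^{\mathrm{new}}$ allow Ribet's original argument to go through without essential change.
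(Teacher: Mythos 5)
The paper does not prove Theorem~\ref{level_raising}: it is explicitly \emph{recalled} from the literature, attributed to Ribet \cite{rib} for weight~$2$ and Diamond \cite{dia} for higher weight, and is simply used as a black box in the proof of \thmref{main}. There is therefore no ``paper's own proof'' to compare your sketch against; the only in-text information about its proof is the attribution and the role of the hypothesis $\ell\nmid\tfrac{1}{2}\varphi(N)Np(k-2)!$.

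As a sketch of the known argument, your outline is accurate and contains the right ingredients. The $(2)\Rightarrow(1)$ direction is essentially a complete proof: the paper itself carries out exactly this Chebotarev-plus-Langlands comparison of $\mathrm{Frob}_p$-eigenvalues in Case~(ii) of the proof of \thmref{main}, using the local-at-$p$ shape $\bigl(\begin{smallmatrix}\chi_\ell^{k/2}&*\\0&\chi_\ell^{k/2-1}\end{smallmatrix}\bigr)\otimes\mu_{a_f(p)/p^{k/2-1}}$ from Langlands \cite{lan}, and the conclusion $a_g(p)\equiv\pm p^{k/2-1}(1+p)$ is what the paper isolates as its own refinement (\thmref{refine}). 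One small imprecision: \lemref{alr}(3) as stated in the paper is for prime level $p$ only; for $p$-new forms in $S_k(Np)$ with $N>1$ you should invoke the general Atkin--Lehner result $a_f(p)^2=p^{k-2}$ rather than cite that lemma verbatim. For $(1)\Rightarrow(2)$ you correctly name the decisive tools (Ihara's lemma / the non-Eisenstein localization of the degeneracy-map kernel, the Fontaine--Laffaille reason for $\ell>k-2$, multiplicity one, and the Deligne--Serre lift), but this remains a high-level roadmap rather than a proof --- the dimension/localization argument and the passage from a mod~$\ell$ eigensystem supported on the $p$-new part to a characteristic-zero $p$-newform each require real work that your paragraph gestures at without executing. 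That said, none of this is a gap relative to the paper, since the paper makes no attempt to reprove Ribet--Diamond and neither were you expected to.
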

	In \cite[Theorem 2]{gapo}, Gaba-Popa obtained a refinement of Diamond's level raising theorem and  their proof is (loosely speaking) a part of the proof of their main result. In the same vein, we record the following refinement of the above theorem which also strengthens  \cite[Theorem 2]{gapo} under a mild assumption. 
	\begin{thm}\label{refine}
		Let $k\ge 2$ be an even integer, $p$ a prime, $N$ a positive integer coprime to $p$ and $\varepsilon \in \{\pm  1\}$.
		If $k=2$ we
		assume furthermore that $\varepsilon=-1.$ Suppose  $g(z)=\sum_{n\ge 1}a_g(n)e^{2\pi i n z} \in S_k(N)$ is a newform. 
		Let $\ell\ge  k-1$ be a prime such that $p\not\equiv -1\pmod*{\ell}$ and $\ell\nmid N \phi(N)$ and let $\lambda$ be a prime ideal above $\ell$ in the field generated by the eigenvalues of all eigenforms in $S_k(Np)$ and $S_k(N)$.
		Then the following are equivalent:
		\begin{itemize}
			\item[$(1)$] 
			$a_g(p) \equiv -\varepsilon p^{k/2-1}(1+p) \pmod \lambda$.
			\item[$(2)$]
			There exists an eigenform $f\in S_k^{\varepsilon}(Np)$ which is new at $p$ such that 
			\begin{equation*}\label{maineq}
				f(z) \equiv g(z)+ \varepsilon p^{k/2}g(pz) \pmod*{\lambda}.
			\end{equation*}
		\end{itemize}
	\end{thm}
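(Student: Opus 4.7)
I would prove the two directions separately.

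For $(2) \Rightarrow (1)$, if such an $f$ exists, then being new at $p$ in $S_k^\varepsilon(Np)$, Lemma~\ref{alr}(3) gives $a_f(p) = -\varepsilon p^{k/2-1}$. The $q^p$-coefficient of $g(z) + \varepsilon p^{k/2} g(pz)$ equals $a_g(p) + \varepsilon p^{k/2}$, so the congruence in (2) at this coefficient yields $-\varepsilon p^{k/2-1} \equiv a_g(p) + \varepsilon p^{k/2} \pmod*{\lambda}$, which rearranges to (1).

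For $(1) \Rightarrow (2)$, squaring (1) gives $a_g(p)^2 \equiv p^{k-2}(1+p)^2 \pmod*{\lambda}$. The hypotheses $\ell \ge k-1$ (hence $\ell \nmid (k-2)!$), $\ell \nmid N\phi(N)$, and $\ell \ne p$ (forced by $p \not\equiv -1 \pmod*{\ell}$) verify the non-divisibility assumptions of the Diamond-Ribet Theorem~\ref{level_raising}. This produces a $p$-newform $f \in S_k(Np)$ with $a_f(q) \equiv a_g(q) \pmod*{\lambda}$ for every prime $q \nmid Np$. Being new at $p$, by Lemma~\ref{alr}(2)--(3), $f$ is a $W_p$-eigenfunction with some eigenvalue $\varepsilon_0 \in \{\pm 1\}$ and $a_f(p) = -\varepsilon_0 p^{k/2-1}$. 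Next I introduce the auxiliary form $h(z) := g(z) + \varepsilon p^{k/2} g(pz) \in S_k^\varepsilon(Np)$; it is a normalized Hecke eigenfunction for every $T_q$ with $q \ne p$, sharing the eigenvalues of $g$. A direct Hecke-recursion computation using the relation $a_g(p^{a+1}) = a_g(p) a_g(p^a) - p^{k-1} a_g(p^{a-1})$ together with (1) shows that modulo $\lambda$ the reduction $\overline h$ is additionally a $U_p$-eigenform with eigenvalue $-\varepsilon p^{k/2-1}$.

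The pivotal step is to verify $\varepsilon_0 = \varepsilon$. By Chebotarev applied to equality of Frobenius traces at every $q \nmid Np\ell$, the mod-$\lambda$ semisimplified Galois representations satisfy $\overline\rho_{f,\lambda}^{\mathrm{ss}} \cong \overline\rho_{g,\lambda}^{\mathrm{ss}}$. Restricting to the decomposition group at $p$: $\overline\rho_{g,\lambda}|_{G_{\mathbb Q_p}}$ is unramified with Frobenius characteristic polynomial factoring by (1) as $(X + \varepsilon p^{k/2-1})(X + \varepsilon p^{k/2})$, while $\overline\rho_{f,\lambda}|_{G_{\mathbb Q_p}}$ is Steinberg-type (since $f$ is new at $p$) with semisimplification unramified and Frobenius eigenvalues $\{-\varepsilon_0 p^{k/2-1},\, -\varepsilon_0 p^{k/2}\}$. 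Matching these multisets modulo $\lambda$ either gives $\varepsilon_0 = \varepsilon$ directly, or forces $\varepsilon_0 \equiv \varepsilon p \pmod*{\lambda}$; the latter, combined with $\varepsilon_0, \varepsilon \in \{\pm 1\}$, implies $p^2 \equiv 1 \pmod*{\lambda}$, and the hypothesis $p \not\equiv -1 \pmod*{\lambda}$ leaves only $p \equiv 1 \pmod*{\lambda}$, which again yields $\varepsilon_0 = \varepsilon$. With $\varepsilon_0 = \varepsilon$, both $\overline f$ and $\overline h$ are normalized Hecke eigenforms in $S_k(Np, \overline{\mathbb F}_\ell)$ with identical eigenvalues under every Hecke operator, and hence identical Fourier coefficients, so $\overline f = \overline h$ and (2) follows.

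The main obstacle is precisely this sign determination. The hypothesis $p \not\equiv -1 \pmod*{\lambda}$ is exactly what is needed to prevent the ``wrong'' Frobenius eigenvalue $-\varepsilon p^{k/2}$ at $p$ from matching the unramified-quotient datum $-\varepsilon_0 p^{k/2-1}$ of $f$; without it, the global trace information supplied by level raising does not distinguish $\varepsilon$ from $-\varepsilon$.
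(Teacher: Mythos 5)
Your proof follows essentially the same route as the paper's: the paper explicitly says (in the remark after Theorem~\ref{refine}) that the proof is the content of Case~(ii) of the proof of Theorem~\ref{main} starting at \eqref{ref1} — square condition~(1), apply Diamond--Ribet, determine the $W_p$-eigenvalue of the resulting $p$-newform via Langlands' description of $\overline\rho_{f,\lambda}|_{G_p}$ together with the hypothesis $p\not\equiv-1\pmod*{\ell}$, and finally upgrade to the full coefficient-wise congruence by showing that $g(z)+\varepsilon p^{k/2}g(pz)$ is, modulo $\lambda$, a $U_p$-eigenform with the matching eigenvalue. The only cosmetic differences are that the paper compares the \emph{traces} of $\overline\rho_{f,\lambda}(\mathrm{Frob}_p)$ and $\overline\rho_{g,\lambda}(\mathrm{Frob}_p)$ directly (getting $\ell\mid(\delta-\varepsilon)p^{k/2-1}(p+1)$), whereas you compare the full Frobenius-eigenvalue multisets and then rule out the ``cross'' matching — these are equivalent; and your explicit Hecke-recursion verification that $\overline h$ is a $U_p$-eigenform plays the role the paper assigns to the fact that $E^\varepsilon_{k,p}\bmod\ell$ is an eigenform (established in the proof of Theorem~\ref{eigenform}).
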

	\begin{proof}
		We omit the proof because for $N=1$  its proof is the content of the second half of the proof of \thmref{main} (c.f.\,\eqref{ref1}), and for general $N$ the same arguments apply.
	\end{proof}

	\section{Proof of \thmref{main}}\label{proof_main}	
	We give a proof of \thmref{main} using \thmref{eigenform} (the proof 
	of which is given in the next section). We first 
prove that $(1)$ implies $(2)$.
The assumptions  on $\ell $, in view of \thmref{eigenform}, ensure that there is  a normalized eigenform $h(z)= \sum_{n\ge 1}a_h(n)e^{2\pi i n z}\in S_k(p)$ and a prime ideal $\lambda$ above $\ell$ in $K_h$ such that 
\begin{equation}
	\label{f_cong}
	h\equiv E_{k,p}^\varepsilon \pmod* \lambda.
\end{equation}
We now prove that this eigenform $h$ can be replaced by a newform under our assumptions on $\ell$ and $k$.
We distinguish between two cases:\\
{\bf{Case (i)}}\label{new}
{\em The eigenform $h$ is a newform.} We will show that the $W_p$-eigenvalue  of $h$ is $\varepsilon$. 
Writing $h|W_p=\delta h$ with $\delta \in \{\pm 1\},$ we obtain $a_h(p)=-\delta p^{k/2-1}$ by \lemref{alr}. Now by considering the $p$-th Fourier coefficients of both 
functions appearing in \eqref{f_cong}, and using the fact that $\ell \mid (\varepsilon+  p^{k/2}) (\varepsilon+ p^{k/2-1})$, we obtain 
\begin{align*}
	-\delta p^{k/2-1}\equiv 1+p^{k-1}+\varepsilon p^{k/2} \equiv -\varepsilon p^{k/2-1}\pmod* \ell.
\end{align*}
Since the prime $\ell $ is odd and different from $p$, 
this proves that $h\in S_k^\varepsilon(p).$
\\
{\bf{Case (ii)}}\label{notnew}
{\em The eigenform $h$ is not a newform.} Then there is a level 1 eigenform $g$ such that 
the corresponding $\ell$-adic Galois representations $\rho_{h,  \Lambda}$ and $\rho_{g, \Lambda}$ are the same, where $\Lambda$ is a prime ideal above $\ell$ in the compositum of coefficient fields of all normalized eigenforms in $S_k(p)$ and $S_k(1)$.  By
\eqref{f_cong}, we have $a_h(q)\equiv 1+q^{k-1} \pmod* \Lambda$ for all $q \neq p.$ A standard application of the Chebotarev density theorem then shows that $\bar\rho_{h, \Lambda}$ is isomorphic to $1 \oplus \chi_\ell^{k-1}$, where $\chi_\ell$ denotes the mod $\ell$ cyclotomic character. 
Thus, we conclude that
\begin{equation}\label{isom}
	\bar\rho_{g, \Lambda} \simeq 1\oplus \chi_\ell^{k-1}.
\end{equation}
Because $g$ is of level 1, the representation $\bar\rho_{g, \Lambda}$ is unramified outside $\ell$. In particular, $\bar\rho_{g, \Lambda}$ and $\chi_{\ell}$ are unramified at the prime $p$. Taking the trace of the image of ${\rm Frob}_p$ on both sides of \eqref{isom} yields
\begin{equation}\label{g_cong}
	a_g(p)\equiv 1+p^{k-1} \pmod* {\Lambda}.
\end{equation}
Now using that $\ell$ divides $1+ p^{k-1}+ \varepsilon p^{k/2}+ \varepsilon p^{k/2-1},$
we infer that
\begin{equation}\label{ref1}
	a_g(p) \equiv -\varepsilon p^{k/2-1}(1+p) \pmod* {\Lambda},
\end{equation}
which gives
$$
a_g(p)^2 \equiv p^{k-2}(1+p)^2 \pmod* {\Lambda}.
$$
Hence, by  apply  \thmref{level_raising} 
and using the hypothesis $\ell > k-2,$ we obtain a newform $f\in S_k(p)$  for which 
\begin{equation*}
	a_f(q) \equiv a_g(q) \pmod* {\Lambda}, {\rm~~ for~all~} q\neq p.
\end{equation*}
Taken together with \eqref{g_cong} this results in
\begin{equation}\label{h_cong}
	a_f(q) \equiv 1+q^{k-1} \pmod* {\Lambda}, {\rm~~ for~all~} q\neq p.
\end{equation}
Let $\delta$ be the $W_p$-eigenvalue of $f$ and so  $a_f(p)=-\delta p^{k/2-1}$, from \lemref{alr}. In order to complete the proof of \thmref{main}, we only need to show that $\delta=\varepsilon$. The reason is that if $\delta=\varepsilon,$ then 
$
a_f(p)= -\varepsilon p^{k/2-1}\equiv 1 + p^{k-1}+\varepsilon p^{k/2} \pmod* {\Lambda}.
$
Combining this with \eqref{h_cong} and using that $E_{k,p}^\varepsilon$ mod $\ell$ is an eigenform, which has been established in the course of the proof of \thmref{eigenform}, gives that $f\equiv E_{k,p}^\varepsilon \pmod \Lambda,$
thus completing the proof (note that we can restrict $\Lambda$ to $K_f$ to get the required prime ideal above $\ell$ in $K_f$). 
\par Let $G_p$ denote the decomposition group of the absolute Galois group Gal$(\overline{\mathbb Q}/\mathbb Q)$ at a place over $p.$ Denote, for 
any algebraic integer $\alpha,$ the unique 
unramified character $G_p \rightarrow \overline{\mathbb F}_\ell^\times$
sending the arithmetic 
Frobenius ${\rm Frob}_p$ to $\alpha$ mod $\ell$ by $\mu_{\alpha}.$
It follows from the work of Langlands \cite{lan} (see also \cite[Proposition 2.8 (2)]{lowe})
that the restriction of $\bar\rho_{f, \Lambda}$ to $G_p$ is given by
$$
\bar\rho_{f, \Lambda}|_{G_p} \simeq \begin{pmatrix}
	\chi_\ell^{k/2}  & * \\   & 	\chi_\ell^{k/2-1}
\end{pmatrix} \otimes \mu_{a_f(p)/p^{k/2-1}}.
$$
Since $\mu_\alpha$ and $\chi_\ell$ are unramified at $p,$ one can consider the trace of Frob$_p$ on the right hand side and hence ${\rm tr} (\bar\rho_{f, \Lambda}({\rm Frob}_{p}))$ is well-defined.
Since $\bar\rho_{f, \Lambda} \simeq \bar\rho_{g, \Lambda}$ (up to semisimplification),  we have
$
\bar\rho_{f, \Lambda}|_{G_{p}} \simeq \bar\rho_{g, \Lambda}|_{G_{p}}.
$
Taking the trace of the image of ${\rm Frob}_p$ yields 
$$
\left(p^{k/2}+ p^{k/2-1} \right)\frac{a_f(p)}{p^{k/2-1}} \equiv a_g(p) \pmod* {\Lambda}.
$$
The congruence  \eqref{g_cong} together with $a_f(p)=-\delta p^{k/2-1}$ gives
$$
-\delta(p^{k/2}+ p^{k/2-1}) \equiv 1 +p^{k-1} \equiv -\varepsilon (p^{k/2}+ p^{k/2-1}) \pmod* {\Lambda},
$$
which yields
$
\ell \mid (\delta-\varepsilon)p^{k/2-1}(p+1).
$
Since by assumption $(\ell, p(p+1))=1,$ 
we infer from this that $\delta =\varepsilon,$ and so (1) implies (2).

It remains to show that $(2)$ implies $(1)$. Let $\ell \ge 5$ be a prime for which there exists a newform 
$f(z)=\sum_{n\ge 1}a_f(n)e^{2\pi i n z}\in S_k^\varepsilon(p)$ such that 
\begin{equation}\label{fe}
	f\equiv E_{k,p}^\varepsilon \pmod* \lambda,
\end{equation}
for some prime ideal $\lambda$ above $\ell$ in ${K_f}$. Since the constant term 
of $f$ is zero and the norm of $\lambda$ is a power of $\ell,$ 
we get $\ell \mid  \frac{B_k}{2k}(\varepsilon+  p^{k/2})$. The fact that $f$ is a newform with $W_p$-eigenvalue  $\varepsilon,$ along with \lemref{alr} yields $a_f(p)=-\varepsilon p^{k/2-1}$. Taken together with the congruence \eqref{fe} at the prime $p,$ this leads to 
$-\varepsilon p^{k/2-1}\equiv 1+ p^{k-1}+ \varepsilon p^{k/2} \pmod* \lambda.$
Since $\lambda$ is a prime ideal above $\ell,$ this completes the proof. \hfill {$\square$}

	\section{Variants of \thmref{main} and proof of \thmref{rmain}}\label{v-proof}
	As promised in the previous section, we now give a proof of \thmref{eigenform}.
This result may be of independent interest because of the limited assumptions on $\ell$ when compared with \thmref{main}. 
Recall that \thmref{rmain} is an immediate consequence of \thmref{eigenform}. 
We also remind the reader that by ``eigenform" we mean an eigenfunction of all the Hecke operators $T_n$, $n\ge 1$.
\begin{thm}\label{eigenform}
	Let $k\ge 2$ be an even integer, $\ell \ge 5$ and $p$ be  primes and $\varepsilon \in \{\pm  1\}$. If $k=2$, we also assume that $\varepsilon=-1$. Suppose that $\ell$ divides
	both $\frac{B_k}{2k}(\varepsilon+  p^{k/2})$ and
	$(\varepsilon+  p^{k/2}) (\varepsilon+ p^{k/2-1}).
	$
	Then there exists a normalized eigenform $h\in S_k(p)$ and a prime ideal $\lambda$ over $\ell$ in the coefficient field of $h$ such that 
	$$
	h\equiv E_{k,p}^{\varepsilon} \pmod*{\lambda}.
	$$		
	Moreover, if $k=2$, then $h\in S_2^-(p)$ is a newform.
\end{thm}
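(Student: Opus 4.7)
The plan is to show that $\bar E := E_{k,p}^\varepsilon \pmod*{\lambda}$ is a non-zero cuspidal Hecke eigenform in $S_k(p, \overline{\mathbb F}_\ell)$ and then invoke the Deligne-Serre lifting lemma recalled in Section \ref{mr} to produce the desired normalized eigenform $h \in S_k(p, \overline{\mathbb Z}_\ell)$. Non-vanishing of $\bar E$ is immediate since the first Fourier coefficient of $E_{k,p}^\varepsilon$ equals $1$; cuspidality is built into the first hypothesis, which forces the constant term $-\frac{B_k}{2k}\varepsilon(\varepsilon+p^{k/2})$ to vanish modulo $\lambda$.

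For primes $q\neq p$, $E_{k,p}^\varepsilon$ is automatically a $T_q$-eigenform with eigenvalue $1+q^{k-1}$, since both $E_k(z)$ and $E_k(pz)$ are. The crux is therefore the $U_p$-action. Using the identity $\sigma_{k-1}(pn) = (1+p^{k-1})\sigma_{k-1}(n) - p^{k-1}\sigma_{k-1}(n/p)$, one verifies at level $p$ that
\[
U_p E_k = (1+p^{k-1})E_k - p^{k-1} E_k(pz), \qquad U_p\bigl(E_k(pz)\bigr) = E_k,
\]
and consequently
\[
U_p E_{k,p}^{\varepsilon} = (1+p^{k-1}+\varepsilon p^{k/2})E_k - p^{k-1} E_k(pz).
\]
For this to be proportional to $E_{k,p}^\varepsilon = E_k + \varepsilon p^{k/2} E_k(pz)$ modulo $\ell$, the ratio of the two coefficients on the right must equal $\varepsilon p^{k/2}$; a short manipulation shows this condition is equivalent to $\ell \mid (\varepsilon + p^{k/2})(\varepsilon + p^{k/2-1})$, which is precisely the second hypothesis. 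Splitting into the two sub-cases $p^{k/2}\equiv -\varepsilon$ and $p^{k/2-1}\equiv -\varepsilon \pmod*{\ell}$ determines the $U_p$-eigenvalue as $p^{k-1}$, respectively $1$, modulo $\ell$; in the second sub-case one crucially uses that squaring the congruence gives $p^{k-2}\equiv 1 \pmod*{\ell}$, whence $p^{k-1}\equiv p$ and the calculation collapses cleanly.

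With $\bar E$ identified as a non-zero cuspidal mod-$\ell$ eigenform, Deligne-Serre yields a normalized eigenform $h \in S_k(p, \overline{\mathbb Z}_\ell)$ and a prime ideal $\lambda$ over $\ell$ in $K_h$ with $h \equiv E_{k,p}^\varepsilon \pmod*{\lambda}$. For the $k=2$ addendum, note that $S_2(1)=0$, so $h$ is automatically a newform in $S_2(p)$, and in particular has a well-defined Atkin-Lehner eigenvalue $\delta\in\{\pm 1\}$. Comparing $p$-th Fourier coefficients we find $a_h(p) \equiv \sigma_1(p) + (-1)\cdot p\cdot\sigma_1(1) = 1 \pmod*{\lambda}$, while \lemref{alr}(3) gives $a_h(p) = -\delta$; since $\ell \geq 5$ is odd this forces $\delta = -1$, so $h \in S_2^-(p)$.

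The main obstacle is the $U_p$-eigenform verification in Paragraph~2: one must not only carry out the explicit reduction to see that the second divisibility hypothesis is \emph{exactly} the condition needed, but also observe in the $\ell \mid \varepsilon + p^{k/2-1}$ sub-case that the auxiliary relation $p^{k-2}\equiv 1 \pmod*{\ell}$ comes for free by squaring. Everything thereafter is a direct application of Deligne-Serre together with \lemref{alr}.
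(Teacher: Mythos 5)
Your approach is the same as the paper's: reduce $E_{k,p}^{\varepsilon}$ modulo $\ell$, show it becomes a non-zero cuspidal eigenform for the full Hecke algebra (including $U_p$), and then lift with Deligne--Serre. The $U_p$ computation and the verification that the second divisibility hypothesis is exactly what makes $\overline{E}_{k,p}^{\varepsilon}$ a $U_p$-eigenform are correct and match the paper's argument (the paper does it coefficient-by-coefficient, you organize it via the $2\times 2$ coefficient matrix in $E_k, E_k(pz)$; the paper simply records the eigenvalue as $1+p^{k-1}+\varepsilon p^{k/2}$ without your case split, which is an optional but harmless extra). The $k=2$ addendum is also handled identically.

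There is one genuine gap in your cuspidality step. You argue that $\overline{E}_{k,p}^{\varepsilon}$ is cuspidal because the first hypothesis forces its constant term at $\infty$ to vanish modulo $\ell$. But $\Gamma_0(p)$ has two cusps, $\infty$ and $0$, and vanishing of the constant term at $\infty$ alone does \emph{not} imply cuspidality (for instance $E_k - E_k(pz)$ has zero constant term at $\infty$ but nonzero constant term at $0$). The paper closes this by observing that $E_{k,p}^{\varepsilon}|W_p = \varepsilon E_{k,p}^{\varepsilon}$ and that $W_p$ interchanges the two cusps, so the constant term of $E_{k,p}^{\varepsilon}$ at $0$ equals the one at $\infty$ up to a sign and a power of $p$, and therefore also vanishes modulo $\ell$; only then does the $q$-expansion principle give $\overline{E}_{k,p}^{\varepsilon}\in S_k(p,\overline{\F}_\ell)$. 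You should insert this argument about the second cusp before invoking Deligne--Serre. (The paper additionally invokes Carayol's lemma to first produce a characteristic-zero cusp form lifting $\overline{E}_{k,p}^{\varepsilon}$ before applying Deligne--Serre, but with the version of the Deligne--Serre lifting lemma recalled in the preliminaries your shortcut is defensible.)
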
 
\begin{proof}
	We observe that $E_{k,p}^{\varepsilon}|W_p =\varepsilon E_{k,p}^{\varepsilon}$ and 
	that $W_p$ interchanges both the cusps of $\Gamma_0(p),$ which implies that the constant term of $E_{k,p}^{\varepsilon}$ at both the cusps is $-\frac{B_k}{2k}\varepsilon (\varepsilon+  p^{k/2})$, up to a sign and powers of $p$. Since  $\ell \mid  \frac{B_k}{2k}(\varepsilon+  p^{k/2})$, it follows from 
	the $q$-expansion principle (here $q=e^{2\pi iz}$) that the reduction of $E_{k,p}^{\varepsilon}$ modulo $\ell$ gives rise to an element $\overline{E}_{k,p}^{\varepsilon} \in S_k^{\varepsilon}(p, \mathbb F_\ell) \subset S_k(p,\overline{\mathbb F}_{\ell})$. As $E_k$ is an eigenfunction for all the Hecke operators $T_q ~(q \neq p),$ all of which commute with $W_p$, we see that
	$E_{k,p}^{\varepsilon}$, hence $\overline{E}_{k,p}^{\varepsilon}$ is a common eigenfunction of all $T_q$ ($q\neq p$). We next claim that the assumptions on the prime  $\ell$ ensure that $\overline{E}_{k,p}^{\varepsilon}$ is also an eigenfunction of the operators $U_p$. For $k=2$, it is easy to see that $U_p E_{2,p}^-= E_{2,p}^-$
	which shows that ${E}_{2,p}^-,$ and so in particular  $\overline {E}_{2,p}^-$, is an eigenfunction for $U_{p}$ with eigenvalue 1. For $k\ge 4$, a simple computation gives that if $a(n)$ and $b(n)$ are the  $n$th Fourier coefficients of $E_{k,p}^\varepsilon(z),$ 
	respectively $U_{p} E_{k,p}^\varepsilon(z)$, then 
	$$
	b(n)=\begin{cases}
		a(0) & \text{if~}\,n=0;\\
		(1+p^{k-1}+\varepsilon p^{k/2}) a(n) & \text{if~}p\nmid n;\\
		(1+p^{k-1}+\varepsilon p^{k/2}) a(n) -\varepsilon \sigma_{k-1}(n/p)p^{k/2}(\varepsilon+  p^{k/2}) (\varepsilon+ p^{k/2-1}) & \text{otherwise}.
	\end{cases}
	$$
	It shows that $E_{k,p}^\varepsilon$ is not an eigenfunction of $U_p,$ but 
 by assumptions  $\ell  \mid \frac{B_k}{2k}(\varepsilon+  p^{k/2})$ and $\ell \mid (\varepsilon+  p^{k/2}) (\varepsilon+ p^{k/2-1}),$  it follows that
	\begin{equation}\label{eisenstein_eigenform}
	U_p E_{k,p}^\varepsilon(z)\equiv (1+p^{k-1}+\varepsilon p^{k/2}) E_{k,p}^\varepsilon(z) \pmod* \ell.
		\end{equation}
	In other words,  $\overline{E}_{k,p}^\varepsilon$ is an eigenfunction of $U_p$ with eigenvalue $1+p^{k-1}+\varepsilon p^{k/2}$ and this proves our claim.
	
	The reduction map from $S_k(p,\bar{\mathbb Z}_\ell )$ to $S_k(p,\overline{\mathbb F}_\ell)$  is surjective by Carayol's lemma \cite[Proposition 1.10]{edi}. Hence, there exists an element in $S_k(p,\mathcal O_K)$
	having  $\overline{E}_{k,p}^\varepsilon$ as mod $\ell$ reduction, where $\mathcal O_K$ is the ring of integers of some finite extension $K$ of $\mathbb Q_\ell$. 
	In other words, $\overline{E}_{k,p}^\varepsilon$ is the reduction of a characteristic $0$ cusp form, 
	which  may not be an eigenfunction for the Hecke operators. 
	Now we use the Deligne-Serre lifting lemma \cite[Lemma 6.11]{dese} guaranteeing  
	the existence of an $h'\in S_k(p,\mathcal O_L)$ that is a normalized common eigenfunction for every element of $\{T_q, U_p: q\neq p\},$
	such that 
	\begin{center}
		$h'\equiv E_{k,p}^\varepsilon\pmod* {\lambda'}$,
	\end{center}
	for some prime ideal $\lambda'$ lying above $\ell$ in $\mathcal O_{L}$. Here $L\supseteq K$ is a finite extension of $\mathbb Q_\ell$ which is a completion of some number field at a prime over $\ell$. Moreover, such an $h'$ arises from some $h(z)= \sum_{n\ge 1}a_h(n)e^{2\pi i n z}\in S_k(p)$ via the embedding of $K_h$ into $L,$ and hence there exists a prime ideal $\lambda$ above $\ell$ in $K_h$ such that 
	\begin{equation*}
		h\equiv E_{k,p}^\varepsilon \pmod* \lambda.
	\end{equation*}
	This proves the first part of \thmref{eigenform}.
	Next, if $k=2$, then we know that $S_2(1)=0,$ and therefore there are no oldforms in $S_2(p),$ and hence $h$ must be a newform.  Let $\delta$ be the $W_p$-eigenvalue 
	of $h.$ Then by \lemref{alr}
	we have $a_h(p)=-\delta,$ whereas the $p$-th Fourier coefficient of $E_{2,p}^{-1}$ is 1. This gives rise to $-\delta \equiv 1\pmod* \lambda,$ and hence $h\in S_2^-(p),$ as $\ell$ (the characteristic of $\lambda$) is odd.  
\end{proof}

The next result refines \cite[Theorem 1]{dufr} and it is a direct consequence of \thmref{eigenform} and \cite[Theorem 1]{dagu}. 
\begin{cor}\label{eigenform_varepsilon}
	Let $k\ge 2$ be an even integer, $p$ a prime  and $\varepsilon \in \{\pm  1\}$. 
	If $k=2$, we
	also	assume that $\varepsilon=-1.$  
	Let 
	$\ell\ge 5$ be a prime divisor of  $\frac{B_k}{2k}(\varepsilon+  p^{k/2}).$
	Then there exists a normalized eigenfunction $f \in S_k^\varepsilon(p)$ for all $T_q$ with $q\neq p,$ and a prime ideal $\lambda$ over $\ell$ in the coefficient field of $f$ such that 
	$$f\equiv E_{k,p}^{\varepsilon} \pmod*{\lambda}.$$	
\end{cor}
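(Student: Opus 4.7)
The plan is to split on whether $\ell \mid \frac{B_k}{2k}$, applying \cite[Theorem 1]{dagu} in one branch and \thmref{eigenform} in the other, and in both cases exhibiting $f$ inside the basis $\mathcal{B}$ of $S_k^\varepsilon(p)$ of normalized Hecke eigenfunctions for $T_q$, $q \neq p$.

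If $\ell \mid \frac{B_k}{2k}$, then \cite[Theorem 1]{dagu} produces a normalized eigenform $g \in S_k(1)$ and a prime $\lambda$ above $\ell$ in $K_g$ with $g \equiv E_k \pmod*{\lambda}$. Since $g$ is of level one, $g|W_p(z) = p^{k/2}g(pz)$, so $f := g + \varepsilon g|W_p \in \mathcal{B} \subset S_k^\varepsilon(p)$ is normalized and satisfies $f \equiv E_k(z) + \varepsilon p^{k/2}E_k(pz) = E_{k,p}^\varepsilon \pmod*{\lambda}$, as required.

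If instead $\ell \nmid \frac{B_k}{2k}$, the standing hypothesis forces $\ell \mid (\varepsilon + p^{k/2})$, and therefore $\ell \mid (\varepsilon + p^{k/2})(\varepsilon + p^{k/2-1})$, so both divisibility hypotheses of \thmref{eigenform} hold and one obtains a normalized Hecke eigenform $h \in S_k(p)$ with $h \equiv E_{k,p}^\varepsilon \pmod*{\lambda}$. When $h$ is a newform, the comparison of $p$-th Fourier coefficients carried out in Case (i) of the proof of \thmref{main} pins its Atkin-Lehner eigenvalue to $\varepsilon$, so $h \in S_k^\varepsilon(p)$ and one takes $f = h$. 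When $h$ is a $p$-oldform arising from some normalized eigenform $g \in S_k(1)$, the identities $a_g(q) = a_h(q) \equiv 1 + q^{k-1} \pmod*{\lambda}$ for $q \neq p$, combined with Chebotarev and the fact that the level-one representation $\bar\rho_{g,\lambda'}$ is unramified at $p$, force $\bar\rho_{g,\lambda'}^{\mathrm{ss}} \simeq 1 \oplus \chi_\ell^{k-1}$ and hence $a_g(p) \equiv 1 + p^{k-1} \pmod*{\lambda}$. Multiplicativity of $a_g$ and $\sigma_{k-1}$ then propagates this to $a_g(n) \equiv \sigma_{k-1}(n) \pmod*{\lambda}$ for every $n \geq 1$, and $f := g + \varepsilon p^{k/2} g(pz)$ serves the same purpose as in the first branch.

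The main obstacle is precisely this oldform sub-case: \thmref{eigenform} returns an eigenform that need not lie in the prescribed eigenspace $S_k^\varepsilon(p)$. The remedy is to descend the level-$p$ congruence to a full level-one congruence via a Galois-theoretic argument at the unramified prime $p$ and then rebuild $f$ out of $\mathcal{B}$; once the level-one congruence is in place, the coefficient-wise verification $a_f(n) \equiv a(E_{k,p}^\varepsilon)(n) \pmod*{\lambda}$ splits routinely according to whether $p \mid n$.
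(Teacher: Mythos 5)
Your proposal is correct and takes essentially the same route as the paper: both split on whether $\ell \mid \frac{B_k}{2k}$, invoke Datskovsky--Guerzhoy in the first branch and \thmref{eigenform} in the second, then resolve the newform/oldform dichotomy exactly as in Cases (i) and (ii) of the proof of \thmref{main}, building $f = g + \varepsilon g|W_p$ when a level-one eigenform is involved. The only cosmetic difference is that you write out the Chebotarev/unramified-at-$p$ argument for the oldform sub-case explicitly, whereas the paper simply cites the relevant steps of Case (ii) and \eqref{g_cong}.
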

\begin{proof}
	If $\ell \mid \frac{B_k}{2k}$, applying \cite[Theorem 1]{dagu} gives a normalized eigenform $g\in S_k(1)$ and a prime ideal $\lambda$ above $\ell$ such that $g\equiv E_k \pmod* \lambda$. In this case, the form $f:=g+\varepsilon g|W_p$ is a desired eigenfunction. We now assume that $\ell\nmid \frac{B_k}{2k},$ which implies  $\ell \mid (\varepsilon+  p^{k/2})$. Then, by \thmref{eigenform}, we obtain a normalized eigenform $h\in S_k(p)$ and a prime ideal $\lambda$ above $\ell$ in the coefficient field of $h$ such that $h\equiv E_{k,p}^\varepsilon \pmod* \lambda$.  If $h$ is a newform, then Case (i) in the proof of \thmref{main} gives $h\in S_k^\varepsilon(p)$ and hence we are done. Whereas, if $h$ is not a newform, then following the arguments in Case (ii) and by \eqref{g_cong}, we have a normalized eigenform $g\in S_k(1)$ of level $1$ such that $g\equiv E_k \pmod*{\lambda},$ where $\lambda$ is a prime ideal above $\ell$ in the coefficient field of $g$. As before, $f:=g+\varepsilon g|W_p$ serves our purpose.
\end{proof}
	\subsection*{Proof of \thmref{rmain}}
		The proof uses \thmref{eigenform} and some ideas used in the proof of \thmref{main}. So we only give a sketch here. We first notice that $\ell  \mid (\varepsilon + p^{k/2})$ and hence  from \thmref{eigenform} we have an eigenform $h\in S_k(p)$ such that $h\equiv E_{k,p}^{\varepsilon}\pmod*{\lambda}.$ We now claim that because of the conditions $\ell\nmid \frac{B_k}{2k}$ and $k\not\equiv0 \pmod{\ell-1}$, the eigenform $h$  has to be a newform with $W_p$-eigenvalue $\varepsilon$, i.e., the Case (ii) in the proof of \thmref{main} does not occur at all. Suppose it does occur, then by \eqref{g_cong} we have an eigenform $g$ of level $1$ such that 
		$g\equiv E_k \pmod*{\Lambda}.$ Since $g$ has integral Fourier coefficients in its coefficient field and $k\not\equiv0 \pmod*{\ell-1}$, applying \cite[Chapter X, Theorem 8.4]{lang} gives $\ell \mid  \frac{B_k}{2k}$, which is a contradiction.
	
	\section{Proof of \thmref{main2}}\label{proof_main2}
	The main idea of the proof is to construct, in each
	of the three cases, a  modular form $g \in M^{\varepsilon}_k(p)$ 
	with integral Fourier coefficients and having  non-zero 
	constant term $a_g(0)$,
	coprime to $N^{\varepsilon}_{k,p},$ such that if one would define
	\begin{equation}\label{f}
		f:=E_{k, p}^{\varepsilon} +\frac{B_k}{2k}\varepsilon (\varepsilon+p^{k/2})\frac{g}{a_g(0)},
	\end{equation}
	then such $f$ should be a non-zero cusp form. The existence of such $g$ then ensures that 
	the corresponding $f$ has rational Fourier coefficients and, moreover,
	$$f\equiv E_{k, p}^\varepsilon \pmod*{N_{k, p}^\varepsilon}.$$
	\subsection{Case (a)}
	Let
	dim$M_k^\varepsilon(p)=d+1$. From  \cite{chki13} for $\varepsilon=+1$ and   \cite{ckl} for $\varepsilon=-1$, we know that for such choices of primes $p$ there exists a 
	basis $\{f_0, f_1,\cdots , f_d\}$ of $M_k^{\varepsilon}(p),$ known as Victor-Miller basis,
	such that all the Fourier coefficients of the $f_j$ are integers 
	and have  Fourier series expansion of the form
	$$
	f_j(z)=e^{2\pi ijz}+O(e^{2\pi i(d+1)z})~~~{\rm for~} 0\le j\le d.
	$$ 
	So, the obvious choice for $g$ in this case is $f_0,$ completing the proof.

	\subsection{Case (b)}
	For any even integer $\alpha\ge 4$ observe that $G_{\alpha}(z)G_{\alpha}(pz)\in M_{2\alpha}^+(p)$, where
	$G_{\alpha}$ is the  Eisenstein series of weight $\alpha$ defined by
	$$
	G_\alpha(z)= 1-\frac{2\alpha}{B_\alpha}\sum_{n\ge 1}\sigma_{\alpha-1}(n)e^{2\pi i n z}.
	$$
	We first consider the situation when $\varepsilon=+1$ and $k \ge 8$ with $k\equiv 0 \pmod* 4$. Such $k$ can be written as $k=8a+12b=4\beta$ for some non-negative integers $a,$ $b$ and $\beta \ge 2,$ 
	and so  
	$$g(z):=(G_4(z)G_4(pz))^a(G_6(z)G_6(pz))^b \in M_{4\beta}^+(p)$$ 
	has integer Fourier coefficients (as $G_4$ and $G_6$ have integer Fourier coefficients) with constant term $1$.
	Next we claim that the corresponding function $f$ defined by \eqref{f} is non-zero by showing that its $e^{2\pi iz}$ coefficient is non-zero, i.e., 
	$$1+(30a-63b)\frac{B_{4\beta}(1+p^{2\beta})}{\beta} \neq 0.
	$$ 
	For that, we write 
	$\displaystyle{\frac{B_{4\beta}}{8\beta}=\frac{m}{n}}$, where $m\in \mathbb{N}, n\in \mathbb{Z}$ and $(m,n)=1$ and hence  $\displaystyle{1+(30a-63b)\frac{B_{4\beta}(1+p^{2\beta})}{\beta}}$ is zero 
	if and only if $n=8(-30a+60b)(1+p^{2\beta})$ and $m=1$. 
	But one can easily check that $n\neq -8(30a-60b)(1+p^{2\beta})$ for $\beta=2$ and $m>1$ for $\beta\ge 3.$
	\par For $\varepsilon=-1$ and $k \ge 8$ with $k\equiv 2 \pmod* 4$, we write $k=2+8a+12b=2+4\beta$. Define
	$$g(z)=G_{2,p}^-(z)(G_4(z)G_4(pz))^a(G_6(z)G_6(pz))^b \in M_{2+4\beta}^-(p),$$ 
	where $ G_{2,p}^-(z)=G_2(z)-pG_2(pz)\in M_2^-(p)$. This $g$ has integer Fourier coefficients and constant term $1-p$.
	The first Fourier coefficient of the corresponding $f$ is  $$1+(240a-504b-24)\frac{B_{2+4\beta}(p^{1+2\beta}-1)}{2(2+4\beta)},$$ and as before one can verify that it is non-zero.
	
	\subsection{Case (c)}
	For $\varepsilon=+1$, write $k=10 \alpha$ with $\alpha >0$.
	Define 
	$$g(z):=(p^2G_4(pz)G_6(z)+p^3G_4(z)G_6(pz))^{\alpha}.$$
	Observe that $p^2G_4(pz)G_6(z)+p^3G_4(z)G_6(pz) \in M_{10}^+(p)$ and 
	hence $g\in M_{k}^+(p).$ Further, $g$ has integer Fourier coefficients and constant term $(p^2+p^3)^{\alpha}.$ Since 
	by assumption, $p(p+1)$ does not divide $N^+_{k,p},$ it follows
	that the corresponding $f$ defined by \eqref{f} satisfies
	$$f\equiv E_{k, p}^\varepsilon \pmod*{N_{k, p}^\varepsilon}.$$
	To show that $f$ is non zero we prove  that its $e^{2\pi iz}$ coefficient is
	non-zero, i.e.,
	$$
	1+\frac{(240p^3-504p^2)\alpha (p^2+p^3) B_{10\alpha}(1+p^{5\alpha})}{20\alpha (p^2+p^3)^{\alpha}} \neq 0.
	$$ 
	First suppose that $\alpha>1$. Write $$\frac{B_{10\alpha}(1+p^{5\alpha})}{20\alpha(p^2+p^3)^{\alpha-1}}=\frac{m}{n}\text{~with~}(m, n)=1.$$  Then one checks that  $m$ is always greater than 1, 
	and therefore the coefficient can not be equal to zero.
	If $\alpha=1$ we write $B_{10}(1+p^{5})/20=m/n$ with $(m, n)=1$. Again the coefficient is zero if and only if $n=-(240p^3-504p^2) m$. Since $(m, n)=1$, we must have $m=1$. But for $\alpha =1$, we have $B_{10}(1+p^{5})/20=(1+p^5)/264$, i.e.,   if $p\ge 5$ then it can be easily seen that $m>1.$ For the remaining two primes we have
	$n\neq -(240p^3-504p^2)$.
	
	Finally we assume that $\varepsilon=-1$ and $k=2+10 \alpha$, for some $\alpha$, integer. Define $$g(z):=G_{2,p}^-(z)(p^2G_4(pz)G_6(z)+p^3G_4(z)G_6(pz))^{\alpha}.$$
	This $g$ has all the required properties and the corresponding $f$, defined by \eqref{f}, has $e^{2\pi iz}$ coefficient 
	$$1+\frac{(\alpha(240p^3-504p^2)(p^2+p^3)-24)B_{2+10\alpha}(1+p^{1+5\alpha})}{2(1+10\alpha)(p^2+p^3)^{\alpha}}.$$
	As before, one can easily check that this coefficient is non-zero and hence we are done. \hfill {$\square$}

	\section{Numerical Examples}\label{example}
	In this section, we give several numerical examples of Ramanujan-style congruences
	and  write $q$ for $e^{2\pi iz}.$
	We first recall some basic facts that will be used. To prove that two normalized eigenforms of weight $k$ and level $p$ are congruent, it is enough to check that their first $k(p +1)/12$  Fourier coefficients at prime indices are congruent (this is due to 
	the Sturm bound). Moreover, if $f(z)=\sum_{n\ge 1}a_f(n)q^n$ is a newform and $\sigma \in {\rm Gal}(\overline {\mathbb Q}/ \mathbb Q)$, then its Galois conjugate $f^\sigma(z):=\sum_{n\ge 1}\sigma(a_f(n))q^n$ is a newform. In fact, it is easy to see that if two modular forms $f$ and $g$ are congruent modulo some prime ideal $\lambda$ above $\ell$ and $\sigma \in {\rm Gal}(\overline {\mathbb Q}/ \mathbb Q)$, then $\sigma(\lambda)$  is a prime ideal above $\ell$ and
	\begin{equation}\label{cong_galois}
	f^\sigma \equiv g^\sigma \pmod {\sigma(\lambda)}.
		\end{equation}
	  To simplify the notation in this section, we put
	\begin{center}
		$N_{k,p}^\varepsilon:=$ the reduced numerator of ${\frac{B_k}{2k}(\varepsilon+  p^{k/2})}~~~$ and
		$~~~M_{k,p}^\varepsilon:=(\varepsilon+p^{k/2}) (\varepsilon+p^{k/2-1})$.
	\end{center}
	
	\begin{example}
{\rm 	Take $p=11$ and $k=6$.  For $\varepsilon=-1$, we easily see that
	$N_{6,11}^-=5\cdot 19$ and both the primes $5$ and $19$ divide
	$M_{6,11}^-$, so  $\ell =5$ or $19$.
	We see that $S_{6}^-(11)$ is 3-dimensional, spanned by the newforms
	\begin{align*}
		g(z) &=  q + aq^2 - (1/6a^2 + 5/3a - 64/3)q^3 + (a^2 - 32)q^4 - (3/2a^2
		+ 7a - 98)q^5 \\
		& \hspace{30pt} - (5/3a^2 - 19/3a - 94/3)q^6+ O(q^{7}),
	\end{align*}
	 where $a$ is any root of  the polynomial $x^3 - 90x + 188.$
	{Because any two newforms in $S_{6}^-(11)$ are Galois conjugates,  in view of \eqref{cong_galois}, \thmref{main} ensures the existence of  a congruence between the newform $g$ and
	$E_{6,11}^-$ modulo some prime in $\mathbb Q(a)$ above $5$ (resp. for $19$)
	 which we verify now.} Factoring the ideals (5) and (19) in the ring of
	integers of $\mathbb Q(a)$ gives $5=\lambda \lambda'$ and
	$19=\beta\beta'^2$, where  $\lambda=(5, -1/6a^2 + 1/3a + 28/3)$,
	$\lambda'= (5, 1/6a^2 + 2/3a - 28/3)$, $\beta=(19, 1/6a^2 + 2/3a - 31/3)
	$ and $\beta'=(19, 1/6a^2 + 2/3a - 58/3)$.
	We then check that 
	\begin{equation}\label{l=5}
		g\equiv E_{6,11}^- \pmod* {\lambda'} \hspace{10pt}{\rm~and}     
		\hspace{10pt}   g\equiv E_{6,11}^- \pmod* {\beta}
	\end{equation}
	We emphasize the fact that the  congruence \eqref{l=5} for the
	prime above $\ell =5=6-1$ is new 
	and corresponds to the non-covered case $\ell=k-1$ in  \cite{gapo}.
	
	For $\varepsilon=+1$, we have $N_{6,11}^+=37$ and $37 \mid M_{6,11}^+.$ As $S_{6}^{+}(11)$
	is 1-dimensional and spanned by the newform
	$$f(z)=q - 4q^{2} - 15q^{3} - 16q^{4} - 19q^{5} + 60q^{6}  + O(q^{7}),
	$$
	\thmref{main} guarantees the congruence               
	$$
	f\equiv E_{6,11}^+ \pmod* {37}.
	$$
	As $11^3\equiv -1\pmod*{37},$ the conditions of 
	\thmref{non-divisibility} are satisfied with 
	$\ell=37,p=11,k=6$ and $\varepsilon=1.$ On using that 
	$\gamma_{1,37}=0.47464\ldots,$
	we conclude that $f$ has 
	a refined $\ell$-non-divisibility asymptotic \eqref{refined} with 
	$h_1=36,$ and Euler-Kronecker constant
	$$\gamma_{f;37}=\gamma_{1,37}+\frac{6}{11^6-1}-\frac{5}{11^5-1}
	=0.47464\ldots-0.000027\ldots=0.47461\ldots$$
	Hence, in this case the Ramanujan approximation is better than the
	Landau one.}
	\end{example} 
	
		\begin{example}
			{\rm  
	Although \thmref{main} holds for $\ell>k-2$, we have checked several numerical examples for the case $\ell \le k-2$ and in all those cases, we find that the assertion of \thmref{main} is true. We give one such example here.	
	Take $k=12$ and $p=7$. Then the space of newforms in $S_{12}^{+}(7)$ is 3-dimensional and spanned by 
	the newforms
	\begin{align*}
		f(z)& =q + aq^2 - (11/21 a^2 - 103/7 a - 33758/21)q^3 + (a^2 - 2048)q^4 + (59/7a^2 - 517/7a - 203864/7)q^5 \\
		&\hspace{30pt}+ (-538/21a^2 + 788/7a + 2476144/21)q^6 +O(q^7),
	\end{align*}
	where $a$ is any root of the polynomial $x^3 - 77x^2 - 2854x + 225104.$
	Here $N_{12,7}^+=5 \cdot 181 \cdot 691$ and $5\cdot 181  \mid  M_{12,7}^+$. Therefore, by using the same reasoning used in the previous example, \thmref{main} guarantees the existence of the congruence of $f$ with $E_{12,7}^+$ modulo a prime 
	ideal above 181 in $\mathbb Q(a),$ something we have verified by
	direct computation.
Note that \thmref{main} is not applicable for the prime $\ell =5,$ as $\ell < k-1=11$. But factoring the ideal (5) in the ring of integers of $\mathbb Q(a)$ gives $5=\lambda \lambda'$, where  $\lambda=(5, -1/14a^2 + 23/14a + 1628/7)$ and $\lambda'=(5, 1/42a^2 - 3/14a - 1775/21)$ and we check that
	$$
	f\equiv E_{12,7}^+ \pmod* {\lambda'}.
	$$
}
	\end{example} 

	\begin{example}
		{\rm  
	The dimension of $S_k^+(p)$ turns out to be $1$  for $p=2, 3, 5, 7, 11$ and for certain finite values of $k,$ and the unique newforms have integer Fourier coefficients. In these cases, \thmref{main2} gives a suitable congruence modulo $N_{k,p}^+$. For example if $p=2$ and $k=8,$ then one easily check that $N_{8,2}^{+}=17$. Then
	$S_8^+(2)$  is 1-dimensional and spanned by the newform 
	$$\Delta_{8,2}(z):=\eta^8(z)\eta^8(2z) \in S_8^+(2).$$
	\thmref{main2} gives that a constant multiple of  $\Delta_{8,2}$ and $E_{8,2}^+ $ are the same modulo 17. By comparing the Fourier coefficients, we see that the constant must be 1 and hence 
	$\Delta_{8,2} \equiv E_{8,2}^+ \pmod* {17}.$}
	\end{example} 
	
	\begin{example}
		{\rm 	This example, which falls outside the scope of \thmref{main}, gives a congruence modulo 6 using \thmref{main2}. Take $k=4$ and $p=17,$ and hence $N_{4,17}^-=6$.  The space $S_4^{-}(17)$ is 1-dimensional, spanned by the newform
	$$
	f(z)= q - 3q^{2} - 8q^{3} + q^{4} + 6q^{5} + 24q^{6} - 28q^{7} + 21q^{8} + 37q^{9} + O(q^{10}).
	$$  
	\thmref{main2} guarantees a congruence  between a constant multiple of $f$ and $E_{4,17}^-$ modulo 6. An easy computation shows that the first 6 
	Fourier coefficients of $f$ and $E_{4,17}^-$ are 
	the same modulo 6. Hence, invoking the Sturm bound, 
	$$
	f \equiv E_{4,17}^- \pmod* 6.
	$$
}
\end{example} 
	We end this section by making some comments on Examples 5.6 and 5.7 considered 
	in \cite{dufr}. These involve congruences between the coefficients of a newform in $S_k(p)$ and $E_k$ away from the level $p$.
	More precisely,  in those examples, it is shown that if $\ell \mid \frac{B_k}{2k}(1-p^{k})$ and $\ell \ge 5,$ then there exists a newform $f\in S_k(p)$ such that, for all primes $q\neq p$, 
	modulo a prime ideal $\lambda$ above $\ell$ we have $a_f(q)\equiv 1+q^{k-1}\pmod*{\lambda}$. 
	On using \thmref{main} more can be said. For both 
	examples the prime $\ell$ divides  $\frac{B_k}{2k}(\varepsilon+p^{k/2})$ and $(\varepsilon+  p^{k/2}) (\varepsilon+ p^{k/2-1})$, for some $\varepsilon\in \{\pm 1\}$ and also satisfies 
	the further requirements of \thmref{main}, which then
	yields that the $W_p$-eigenvalue of $f$ is $\varepsilon$ and 
	that $a_f(p)\equiv 1+p^{k-1}+\varepsilon p^{k/2}\pmod*{\lambda}.$

	\section{Intermezzo: Anatomy of integers}\label{dickman}
	This section is a preamble for the next one.
	\par Let $P^+(n)$ denote the largest prime divisor
	of an integer $n\geqslant 2$. Put $P^+(1)=1$. 
	A number $n$ is said to be {\it $y$-friable}\footnote{Some authors use $y$-smooth. Friable
		is an adjective meaning easily crumbled or broken.} if $P^+(n)\leqslant y$. 
	The number of integers $1\leqslant n\leqslant x$
	such that $P^+(n)\leqslant y$ is denoted by $\Psi(x,y)$. 
	The study of the smoothness of integers was dubbed psixyology by the third
	author in his PhD thesis, but is currently called the anatomy of integers 
	(thus the focus shifted from the mind of numbers to their body...).\\
	\indent In 1930, Dickman \cite{Dickman} proved that
	\begin{equation}
		\label{dikkertje}
		\lim_{x\rightarrow \infty}\frac{\Psi(x,x^{1/u})}{x}=\rho(u),
	\end{equation}
	where the {\it Dickman function} 
	$\rho(u)$
	is defined by 
	\begin{equation}
		\label{defie}
		\rho(u)=
		\begin{cases}
			1 & \text{for $0\leqslant u\leqslant 1$};\\
			\frac{1}{u}\int_{0}^1 \rho(u-t) dt & \text{for $u>1$.}
		\end{cases}
	\end{equation}
	We have $0<\rho(u)<1/\Gamma(u+1),$ where $\Gamma$ is the Gamma function. Thus
	$\rho$ is rapidly decreasing. Dickman's result also remains true if we ask
	for the proportion of integers $n\le x$ such that $P^+(n)\le n^{1/u}.$ 
	Thus if $p$ is a prime number and $p-1$ would
	behave like a typical integer, then one would expect that  $P^+(p-1)\le p^{1/u}$ 
	with probability $\rho(u).$ The following 
	known result
	partially confirms this.
	\begin{thm}
		\label{pu}
		Let $s$ be any fixed non-zero integer.
		The set $\{p:P^+(p+s)\ge p^{1/u}\}$ 
		has density $1-\rho(u)$ under the Elliott-Halberstam
		conjecture and unconditionally a lower 
		density at least 
		$1-4\rho(u)$ for $u>u_1,$ where $u_1\in (2.677,2.678)$ is the unique solution of
		the equation $4u_1\rho(u_1)=1.$ 
	\end{thm}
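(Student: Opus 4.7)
The plan is to study the complementary counting function
$$\Psi_s(x, x^{1/u}) := \#\{p \le x : p \text{ prime},\ P^+(p+s) \le x^{1/u}\},$$
so that the density of the set in question is $1$ minus the density of the normalized count $\Psi_s(x,x^{1/u})/\pi(x)$. Morally, shifted primes $p+s$ should behave like typical integers of the same size, so one expects by \eqref{dikkertje} that $\Psi_s(x,x^{1/u})/\pi(x) \to \rho(u)$, and the two parts of the theorem quantify how close to this heuristic one can provably come.

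For the conditional part, the strategy is to apply the Elliott-Halberstam conjecture, which yields strong average equidistribution of primes in arithmetic progressions to moduli as large as $x^{1-\eta}$. Coupled with a Buchstab-style iteration through the prime factors of $p+s$ (decomposing $\Psi_s(x,x^{1/u})$ according to the largest prime factor and recursing), one reproduces the functional equation \eqref{defie} satisfied by $\rho$, giving $\Psi_s(x,x^{1/u}) \sim \rho(u)\,\pi(x)$ and hence the stated density $1-\rho(u)$ for the complement.

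For the unconditional part, the Bombieri-Vinogradov theorem provides the same kind of equidistribution but only up to moduli $x^{1/2-\eta}$. In the remaining range one is forced to use an upper-bound sieve such as Brun-Titchmarsh, which carries a multiplicative constant at most $2$. Running a Buchstab decomposition with this weaker input, the losses compound: one factor of $2$ comes from Brun-Titchmarsh applied to the count $\#\{p\le x : q\mid p+s\}$ for $q$ in the large range, and a second factor of $2$ arises when the recursion is unfolded once. The outcome is the bound
$$\Psi_s(x,x^{1/u}) \le (4\rho(u) + o(1))\,\pi(x),$$
which is nontrivial precisely when $4\rho(u) < 1$, that is, for $u$ strictly exceeding the unique root $u_1$ of $4u_1\rho(u_1)=1$, numerically located in $(2.677, 2.678)$.

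The main obstacle is the unconditional statement: keeping control of the constants throughout the Buchstab iteration, so that the final factor is exactly $4$ and the threshold $u_1$ is exactly the root of $4u\rho(u)=1,$ requires carefully matching the Bombieri-Vinogradov range with the Brun-Titchmarsh regime and verifying that the accumulated error terms remain of order $o(\pi(x))$. The conditional half, by contrast, is essentially a clean application of the Elliott-Halberstam input to a standard sieve identity.
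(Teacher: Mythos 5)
The paper does not prove this theorem; its ``proof'' consists of pointing to the literature: the conditional statement is attributed to Lamzouri \cite{lam} and Wang \cite{wang}, and the unconditional one to Feng--Wu \cite{FW} and Liu--Wu--Xi \cite{liwuxi}. So there is no in-paper argument against which your sketch can be matched step by step; it has to be judged on its own.

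There is a genuine gap in your treatment of the unconditional part, and it surfaces precisely in the threshold $u_1$. You claim an upper bound of the shape
$\Psi_s(x,x^{1/u}) \le (4\rho(u)+o(1))\,\pi(x)$
and then say this is ``nontrivial precisely when $4\rho(u)<1$, that is, for $u$ strictly exceeding the unique root $u_1$ of $4u_1\rho(u_1)=1$.'' But these two conditions are not the same: $4\rho(u)<1$ already holds for $u\gtrsim 2.12$ (since $\rho(2)\approx 0.307$ and $\rho(2.5)\approx 0.130$), whereas the root of $4u\rho(u)=1$ is $u_1\approx 2.677$. If your sketch really produced $\Psi_s(x,x^{1/u})\le (4\rho(u)+o(1))\pi(x)$, the theorem would be nontrivial down to $u\approx 2.12$, which contradicts the stated range. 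So either the claimed bound is wrong, or the accounting that ``one factor of $2$ comes from Brun--Titchmarsh and a second factor of $2$ from unfolding the recursion once'' is not what actually happens. In the Feng--Wu/Liu--Wu--Xi arguments the constant arises from the interaction of the Brun--Titchmarsh constant with the restricted level of distribution (roughly, $\log(x/d)\ge\tfrac12\log x$ when one only sums moduli $d\le x^{1/2}$), and the admissible range $u>u_1$ with $4u_1\rho(u_1)=1$ comes out of a more delicate combination of a Buchstab decomposition with sieve weights; it is not captured by ``unfolding the recursion once.'' As written, your argument neither produces the correct threshold nor explains why the factor is $4$ rather than $2$, so the unconditional half should be regarded as incomplete; the conditional half is a reasonable heuristic outline of the Lamzouri/Wang proofs but would likewise need the Buchstab iteration carried out in detail to be a proof rather than a plan.
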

	\begin{proof}
		A detailed
		proof of the first
		assertion was given by Lamzouri \cite{lam} and, independently, by Wang \cite{wang}. The second assertion is
		due to Feng-Wu \cite{FW} and Liu-Wu-Xi \cite{liwuxi}.
	\end{proof}
	Unconditionally the set considered in 
	\thmref{pu}
	is not known to have a density, and therefore we
	work with the notion of lower and upper density.
	\par Interestingly, \eqref{dikkertje} was 
	already known
	to Ramanujan, for details and more about the
	behaviour of the Dickman function, see, e.g., 
	Moree \cite{mordeB}.
	
	\section{Application to the degree of the coefficient field}\label{application}
	Let $K_f$ be the coefficient field of 
	a normalized eigenform $f\in S_k(p).$ Put
	$$d_k^{\rm new}(p):= \max\{[K_f:\mathbb Q]:f\in S_k(p), ~f\text{~newform}\}$$
	and
	$$d_k(p):= \max\{[K_f:\mathbb Q]:f\in S_k(p), ~f\text{~normalized~eigenform}\}.$$
	Billerey-Menares \cite{bime} showed that for every even integer $k\ge 2$ and every prime $p\ge (k+1)^4$ 
	with $P^+(p-1)\ge 5$ one has 
	\begin{equation}
		\label{5/2}
		d_k^{\rm new}(p) \ge  \frac{5\log(P^+(p-1))}{2k}.
	\end{equation}
	(Actually the original result has the 
	factor $\log(1+2^{(k-1)/2})$ in the denominator, which we prefer to
	replace by the upper bound $k/5.$) In 2015, Luca et al.\,\cite{LMP} 
	showed that there exists a set of primes of natural density at least $3/4$ such 
	that $P^+(p-1)\ge p^{1/4}.$ This result in combination with 
	inequality \eqref{5/2}, then yields that 
	\begin{equation}
		\label{oud}
		d_k^{\rm new}(p) \ge \frac{5\log p }{8 k} 
	\end{equation}
	for a set of primes of density at least $3/4.$ 
	If one wants a lower bound valid for all large
	enough primes $p,$ we still cannot do better than
	Bettin et al.\,\cite{verynew} who showed that 
	$$
	d_k^{\rm new}(p) \gg_k \log \log p,~~ p\rightarrow \infty.
	$$
	On combining \thmref{pu} and inequality 
	\eqref{5/2}, we obtain the following
	improvement of \eqref{oud}.
	\begin{thm}
		Let $k\ge 2$ be an even integer and $u>1$ any real number. Under the Elliott-Halberstam
		conjecture the set of primes $p$ for which
		$$
		d_k^{\rm new}(p) \ge \frac{5\log p }{ 2 u k} 
		$$
		has lower density at least
		$1-\rho(u).$ Unconditionally this
		set has 
		at least lower density $1-4\rho(u)$ for
		$u>2.678.$ 
	\end{thm}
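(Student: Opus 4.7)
The plan is to combine the Billerey--Menares lower bound \eqref{5/2} with \thmref{pu} specialized to $s=-1$. Both halves of the conclusion (Elliott--Halberstam and unconditional) follow from the same chain of implications; only the density of the underlying set of ``good'' primes differs.

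First I would fix an even integer $k\ge 2$ and a real number $u>1$. Let $\mathcal{P}_u$ denote the set of primes $p$ for which $P^+(p-1)\ge p^{1/u}$. Applying \thmref{pu} with $s=-1$, the set $\mathcal{P}_u$ has density $1-\rho(u)$ under the Elliott--Halberstam conjecture, and unconditionally has lower density at least $1-4\rho(u)$ provided $u>u_1$, where $u_1\in(2.677,2.678)$. The hypothesis $u>2.678$ in the theorem ensures we are safely in this unconditional range.

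Next I would check that for every sufficiently large $p\in\mathcal{P}_u$ the bound \eqref{5/2} is applicable. Indeed, once $p\ge (k+1)^4$ and $p^{1/u}\ge 5$, the condition $P^+(p-1)\ge 5$ of \eqref{5/2} holds automatically, so
\[
d_k^{\rm new}(p)\ \ge\ \frac{5\log P^+(p-1)}{2k}\ \ge\ \frac{5\log p}{2uk}.
\]
Both side conditions $p\ge(k+1)^4$ and $p\ge 5^u$ exclude only finitely many primes (for fixed $k$ and $u$), and removing a finite set does not affect lower density. Hence the set of primes satisfying the desired inequality contains $\mathcal{P}_u$ outside a finite set, which yields the two stated lower-density bounds.

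There is essentially no hard step: the whole argument is a one-line substitution of $\log P^+(p-1)\ge (\log p)/u$ into \eqref{5/2}, glued to the anatomy-of-integers input of \thmref{pu}. The only mildly delicate point is to make sure we are invoking \emph{lower density} throughout (since the unconditional version of \thmref{pu} is only a lower-density statement), so that the finitely many excluded primes and the possibly missing density for $\mathcal{P}_u$ cause no trouble.
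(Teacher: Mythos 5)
Your proposal is correct and is exactly the argument the paper intends; indeed the paper offers no written proof beyond the remark ``on combining \thmref{pu} and inequality \eqref{5/2}, we obtain the following,'' and your one-line substitution of $\log P^+(p-1)\ge(\log p)/u$ into \eqref{5/2}, together with the careful handling of the finitely many excluded primes and the use of lower density, is precisely that combination.
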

	\begin{cor}
		The  set of 
		primes $p$ for which 
		\eqref{oud} holds has lower
		density at least $1-4\rho(4)\ge 0.98.$
	\end{cor}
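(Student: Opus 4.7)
\smallskip
\noindent\textbf{Proof proposal.}
My plan is to obtain the corollary as a direct specialization of the preceding theorem to the parameter value $u=4$. The first step is to observe that with $u=4$ the quantity $\frac{5\log p}{2uk}$ appearing in that theorem collapses to $\frac{5\log p}{8k}$, which is precisely the right-hand side of \eqref{oud}. Since $4>2.678$, we are in the regime where the unconditional half of the preceding theorem is in force, so I can invoke it to conclude immediately that the set of primes $p$ for which \eqref{oud} holds has lower density at least $1-4\rho(4)$.

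The only remaining task is to verify the explicit numerical estimate $1-4\rho(4)\ge 0.98$, equivalently $\rho(4)\le 5\cdot 10^{-3}$. I would do this by invoking a tabulated value of the Dickman function (one has $\rho(4)=0.00491\ldots$). In principle this follows directly from the defining recursion \eqref{defie} by integrating successively on $[1,2]$, $[2,3]$, $[3,4]$; note that the crude a priori bound $\rho(u)\le 1/\Gamma(u+1)$ recalled in Section~\ref{dickman} only yields $\rho(4)\le 1/24$, which is not sharp enough for our purposes.

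The main---and in fact the only---obstacle is this small numerical evaluation of $\rho(4)$; everything else is a one-line application of the preceding theorem, and no new ideas are required beyond packaging the theorem together with the explicit value of the Dickman function at $u=4$.
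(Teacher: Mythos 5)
Your proposal is correct and is exactly the argument the paper intends (the corollary is stated without proof precisely because it is the specialization of the preceding theorem at $u=4$, where $\frac{5\log p}{2uk}=\frac{5\log p}{8k}$ matches \eqref{oud} and $4>2.678$ puts you in the unconditional regime). The only content is the numerical check $4\rho(4)=4\cdot 0.004910\ldots<0.02$, which you handle appropriately via the known value of the Dickman function.
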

	In the same spirit and as an application of 
	 \thmref{eigenform}, we establish analogues 
	for $d_k(p)$ of
	the latter theorem and corollary,   
	by following the ideas used in the proof of \cite[Theorem 2]{bime}.
	\begin{thm}
		\label{dkpbound}
		If $k\ge 4$ is an even integer and $p$ any prime, then
		$$d_k(p) \ge \frac{5\log (P^+(p^2-1)) }{2k}.$$
	\end{thm}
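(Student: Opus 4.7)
The plan is to run the norm-theoretic argument of Billerey-Menares underlying \eqref{5/2} in the stronger setting provided by Corollary \ref{eigenform_varepsilon}. The advantage over the newform version is that the Atkin-Lehner sign $\varepsilon$ may now be chosen in response to $\ell$, which is exactly what is needed to reach $P^+(p+1)$ as well as $P^+(p-1)$.

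Set $\ell := P^+(p^2-1)$ and assume $\ell \ge 5$ (otherwise $\tfrac{5\log\ell}{2k} < 1$ and the bound follows from $d_k(p) \ge 1$). Since $\ell$ is odd, it divides exactly one of the coprime factors $p-1, p+1$. A short case analysis on the parity of $k/2$ then selects $\varepsilon \in \{\pm 1\}$ with $p^{k/2} \equiv -\varepsilon \pmod*{\ell}$: take $\varepsilon = -1$ if $\ell \mid p-1$, or if $\ell \mid p+1$ with $4 \mid k$; take $\varepsilon = +1$ if $\ell \mid p+1$ with $k \equiv 2 \pmod*{4}$. Then $\ell \mid \tfrac{B_k}{2k}(\varepsilon + p^{k/2})$, so Corollary \ref{eigenform_varepsilon} produces a normalized eigenfunction $f \in S_k^\varepsilon(p)$ for all $T_q$ with $q \ne p$, a prime $\lambda$ of $K_f$ over $\ell$, and a congruence $f \equiv E_{k,p}^\varepsilon \pmod*{\lambda}$.

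Setting $d := [K_f : \mathbb{Q}] \le d_k(p)$, the remaining task is to bound $d$ from below. Comparing the second Fourier coefficients yields $a_f(2) \equiv 1 + 2^{k-1} \pmod*{\lambda}$ (for $p = 2$ one would instead take $q = 3$, but that case is already trivial as $P^+(3) = 3 < 5$). Whether $f$ is a newform in $S_k^\varepsilon(p)$ or the oldform lift $g + \varepsilon\, g|W_p$ of a level-$1$ normalized eigenform $g$, the $T_q$-eigenvalues for $q \ne p$ coincide with those of a genuine newform of level $1$ or $p$, so Deligne's bound $|a_{f^\sigma}(2)| \le 2^{(k+1)/2}$ holds for every Galois embedding $\sigma$ of $K_f$. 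Writing $a_f(2) = \alpha + \bar\alpha$ with $|\alpha| = |\bar\alpha| = 2^{(k-1)/2}$ and $\alpha \bar\alpha = 2^{k-1}$, the identity
$$a_f(2) - (1 + 2^{k-1}) = -(1-\alpha)(1-\bar\alpha)$$
shows that each Galois factor has absolute value at most $(1 + 2^{(k-1)/2})^2$ and is nonzero because $2^{(k+1)/2} < 1 + 2^{k-1}$ for $k \ge 4$. Hence $N_{K_f/\mathbb{Q}}(a_f(2) - (1+2^{k-1}))$ is a nonzero rational integer bounded above by $(1 + 2^{(k-1)/2})^{2d}$ and divisible by $\ell$. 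Rearranging $\ell \le (1 + 2^{(k-1)/2})^{2d}$ and invoking the estimate $\log(1 + 2^{(k-1)/2}) \le k/5$ recorded after \eqref{5/2} delivers $d \ge \tfrac{5 \log \ell}{2k}$, as wanted.

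The main obstacle will be the opening choice of $\varepsilon$: one must verify that for every combination of $\ell \mid p \pm 1$ with $k$ modulo $4$ some admissible sign exists. This is why $k \ge 4$ is required, so as to avoid the excluded combination $(k, \varepsilon) = (2, +1)$ in Corollary \ref{eigenform_varepsilon}. Everything else---the Deligne bound for the oldform lifts, and the norm computation---is routine.
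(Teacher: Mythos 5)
Your proof is correct and takes essentially the same route as the paper's: choose $\varepsilon$ so that $\ell=P^+(p^2-1)$ divides $\varepsilon+p^{k/2}$, obtain a Hecke eigenfunction congruent to $E_{k,p}^\varepsilon$ modulo a prime above $\ell$, and bound $[K_f:\mathbb{Q}]$ from below via Deligne's estimate and a norm computation applied to $a_f(2)-1-2^{k-1}$. The only (cosmetic) deviations are that you invoke Corollary~\ref{eigenform_varepsilon} where the paper invokes Theorem~\ref{eigenform} directly, and that you spell out the explicit sign choice, the $p=2$ edge case, and the nonvanishing of $a_f(2)-1-2^{k-1}$ -- all of which the paper leaves implicit.
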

	\begin{proof}
		Define $\ell:= P^+(p^2-1).$ First
		assume that $\ell \ge 5.$ Then we can choose $\varepsilon\in \{\pm 1\}$ such that $\ell \mid (\varepsilon+  p^{k/2})$. Applying \thmref{eigenform} yields a normalized eigenform $f=\sum_{n\ge 1}a_f(n)q^n \in S_k (p)$ and a prime ideal $\lambda \subset \mathcal{O}_{K_f}$ above $\ell$ such that 
		$$
		f\equiv E_{k,p}^\varepsilon \pmod* \lambda.
		$$
		The coefficients of the eigenform $f$ and those of any Galois conjugate of $f$ all 
		satisfy Deligne's estimate and hence the non-zero algebraic integer $b:=a_f(2)-1-2^{k-1}$ and all its Galois conjugates have absolute value
		bounded above by $(1+2^{(k-1)/2})^2$.  Because of the above congruence, it is clear that $b\in \lambda$ and hence $\ell$ divides 
		the absolute value of the norm of $b,$ which is at most $(1+2^{(k-1)/2})^{2d}$, where  $d=[K_f:\mathbb Q]$. Therefore we conclude that
		$$
		d_k(p)\ge d\ge \frac{\log \ell}{2\log \left(1+2^{(k-1)/2} \right)}\ge \frac{5\log \ell}{2k}.
		$$
		In the remaining case $\ell\le 3$ we have 
		$5(\log \ell)/2k\le 5(\log 3)/8<1\le d_k(p),$ and there is nothing to prove.
	\end{proof}
	Combining this with \thmref{pu} we obtain the following corollary. 
	\begin{cor}
		Let $k\ge 4$ be even integer  and $u>1$ any real number. The set of primes $p$ for which
		$$
		d_k(p) \ge \frac{5\log p }{ 2 u k} 
		$$
		has lower density at least $1-4\rho(u)$ for
		$u>2.678.$ 
	\end{cor}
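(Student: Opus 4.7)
The plan is to combine \thmref{dkpbound} with \thmref{pu} applied to a shift $s\in\{-1,+1\}$. From \thmref{dkpbound} we have, for every even $k\ge 4$ and every prime $p$, the unconditional bound
$$
d_k(p) \ge \frac{5\log(P^+(p^2-1))}{2k}.
$$
Since $p^2-1=(p-1)(p+1)$, we have $P^+(p^2-1)=\max\{P^+(p-1),P^+(p+1)\}\ge P^+(p+1)$. Thus whenever $P^+(p+1)\ge p^{1/u}$, we obtain $\log(P^+(p^2-1))\ge (\log p)/u$, and consequently
$$
d_k(p)\ge \frac{5\log p}{2uk}.
$$

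Next I would invoke \thmref{pu} with $s=1$: for any real $u>u_1$, where $u_1\in(2.677,2.678)$, the set of primes $p$ with $P^+(p+1)\ge p^{1/u}$ has lower density at least $1-4\rho(u)$. Taking the subset of primes for which this shifted-prime condition holds, the displayed inequality above then gives the required lower bound for $d_k(p)$ on a set of lower density at least $1-4\rho(u)$, completing the proof for all $u>2.678$.

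There is essentially no obstacle: the only subtlety is ensuring we pick the right shift so that \thmref{pu} applies directly, and $s=1$ (or $s=-1$) both work since $P^+(p^2-1)$ dominates $P^+(p\pm 1)$. One could, if desired, slightly improve the density by taking the union of the two shift events $P^+(p-1)\ge p^{1/u}$ and $P^+(p+1)\ge p^{1/u}$, but since \thmref{pu} gives only a lower-density statement unconditionally, the stated bound $1-4\rho(u)$ already follows from a single shift and matches the statement of the corollary.
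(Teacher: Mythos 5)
Your proof is correct and follows exactly the route the paper intends (the paper states only ``Combining this with Theorem~\ref{pu} we obtain the following corollary'' and leaves the details implicit). The key chain is precisely as you lay out: $P^+(p^2-1)\ge P^+(p+1)$, so $P^+(p+1)\ge p^{1/u}$ forces $d_k(p)\ge 5\log p/(2uk)$ via Theorem~\ref{dkpbound}, and Theorem~\ref{pu} with $s=1$ supplies the unconditional lower density $1-4\rho(u)$ for $u>u_1$.
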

	This can be sharpened if
	one solves the
	following problem.
	\begin{prob}
		Show that the set of primes
		$p$ for which $P^+(p^2-1)<p^{1/u}$ 
		has an upper density
		not exceeding $4\rho(u)$ for all $u$ large
		enough.
	\end{prob}	
	By \thmref{pu} under the Elliott-Halberstam
	conjecture each of the inequalities
	$P^+(p-1)< p^{1/u}$ and $P^+(p+1)<p^{1/u}$ 
	is satisfied with probability $\rho(u).$
	Assuming
	independence of the two events leads to the following
	conjecture on invoking Theorem \ref{dkpbound}.
	\begin{con}
		Let $k\ge 4$ be even integer and $u>1$ any real number. 
		The set of primes $p$ for which
		$$d_k(p) \ge \frac{5\log p }{ 2 u k},$$
		has lower density at least $1-\rho(u)^2.$ 
	\end{con}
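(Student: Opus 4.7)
The plan is to combine Theorem \ref{dkpbound} with Theorem \ref{pu} via a heuristic independence statement for the two shifted-prime events. First I would use the identity $P^+(p^2-1) = \max\{P^+(p-1),P^+(p+1)\}$, valid for every odd prime $p$. The event $P^+(p^2-1) \ge p^{1/u}$ therefore fails exactly when both $P^+(p-1) < p^{1/u}$ and $P^+(p+1) < p^{1/u}$ hold simultaneously. By Theorem \ref{dkpbound}, whenever $P^+(p^2-1) \ge p^{1/u}$ one has
$$d_k(p) \ge \frac{5\log P^+(p^2-1)}{2k} \ge \frac{5\log p}{2uk},$$
so it suffices to show that the exceptional set $\mathcal{E}_u := \{p : P^+(p-1) < p^{1/u} \text{ and } P^+(p+1) < p^{1/u}\}$ has upper density at most $\rho(u)^2$.

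Next I would appeal to Theorem \ref{pu}, which under the Elliott-Halberstam conjecture yields that each marginal set $\{p : P^+(p+s) < p^{1/u}\}$ with $s \in \{-1,+1\}$ has density exactly $\rho(u)$. The target bound on $\mathcal{E}_u$ would then follow from an asymptotic independence statement, namely
$$\#\{p \le x : P^+(p-1) < p^{1/u},\ P^+(p+1) < p^{1/u}\} = (\rho(u)^2 + o(1))\,\pi(x),$$
since taking complements yields lower density at least $1-\rho(u)^2$ for the set of primes in the conjecture.

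The hard part is precisely this asymptotic independence. Unconditionally, not even the marginal densities in Theorem \ref{pu} are established, so any decorrelation argument must proceed via the Elliott-Halberstam conjecture (or an adequate substitute) applied \emph{simultaneously} to the two shifts $p \pm 1$. A natural attack would be a bilinear extension of the Lamzouri-Wang method: one would analyze the joint friability count through a two-dimensional saddle-point treatment of the relevant Mellin transforms, and establish a level-of-distribution result strong enough to control the characteristic function of simultaneous $y$-friability of $n-1$ and $n+1$ on primes. Such a joint statement is, to our knowledge, not currently available even on Elliott-Halberstam, which is why the assertion is posed here only as a conjecture.
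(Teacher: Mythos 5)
Your reasoning reproduces exactly the heuristic the paper uses to motivate this conjecture: Theorem~\ref{dkpbound} reduces the claim to estimating the density of primes with $P^+(p^2-1)\ge p^{1/u}$, the factorization $p^2-1=(p-1)(p+1)$ gives $P^+(p^2-1)=\max\{P^+(p-1),P^+(p+1)\}$, and the bound $1-\rho(u)^2$ then follows from Theorem~\ref{pu} together with the (unproved) assumption that the two friability events for $p-1$ and $p+1$ are asymptotically independent. You have also correctly identified that this independence step is exactly why the statement is posed as a conjecture rather than a theorem, which matches the paper's own framing.
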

	The independence assumption was already made
	by Erd\H{o}s and Pomerance, see the two articles
	by Wang \cite{wang,wang3}, who also made partial
	progress in proving it.
	\par Using elementary number theory it is easy to show that
	$P^+(p^2-1)\le 3$ if and only if $p\in \{2,3,5,7,17\}.$ A much deeper
	result is the following trivial
	consequence of a celebrated 
	result of Evertse (where as usual we denote by $\pi(x)$ the number
	of primes $p\le x$).
	\begin{prop}
		Let $x$ be a real number.
		The number of primes $p$ for which
		$P^+(p^2-1)\le x$ is at most $3\cdot 7^{1+2\pi(x)}.$
	\end{prop}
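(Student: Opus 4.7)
The plan is to recast the condition $P^+(p^2-1)\le x$ as an $S$-unit equation in two variables and then invoke Evertse's celebrated bound on the number of its solutions.

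First I would let $S$ denote the set of rational primes $q\le x$, so that $|S|=\pi(x)$; if $x<2$ the statement is vacuous, so I may assume $2\in S$. For any odd prime $p$ with $P^+(p^2-1)\le x$, every prime factor of $(p-1)(p+1)$ lies in $S$, and since $2\in S$ the integers $(p-1)/2$ and $(p+1)/2$ are units in $\Z_S:=\Z[1/q:q\in S]$. The elementary identity $(p+1)-(p-1)=2$, divided by $2$, then reads
$$u+v=1,\qquad u:=\frac{p+1}{2},\quad v:=-\frac{p-1}{2},$$
which is an $S$-unit equation in two unknowns.

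Next I would apply Evertse's theorem: over a number field $K$ of degree $d$, for any set of places $S$ with $s$ finite places, the equation $\alpha+\beta=1$ has at most $3\cdot 7^{d+2s}$ solutions in $S$-units. Specializing to $K=\Q$ (so $d=1$) with $s=\pi(x)$ gives at most $3\cdot 7^{1+2\pi(x)}$ admissible pairs $(u,v)$. Because $p=2u-1$ is recovered from $u$, the assignment $p\mapsto (u,v)$ is injective on primes, and the number of primes $p$ with $P^+(p^2-1)\le x$ is therefore at most $3\cdot 7^{1+2\pi(x)}$.

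The genuine content is packaged entirely inside Evertse's theorem; the reduction is a half-line of algebra, which is why the authors describe the result as a \emph{trivial consequence}. The only bookkeeping obstacle is matching the precise shape of Evertse's exponent --- with $d=1$ and $s=\pi(x)$, the expression $d+2s=1+2\pi(x)$ gives exactly the stated bound --- and the isolated prime $p=2$, if it satisfies the hypothesis, is absorbed by the slack in the bound.
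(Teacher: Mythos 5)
Your proof is correct and follows the same route as the paper: write $(p+1)/2 - (p-1)/2 = 1$ as an $S$-unit equation with $S$ the primes up to $x$, invoke Evertse's bound $3\cdot 7^{1+2s}$, and note the map $p\mapsto\big((p+1)/2,-(p-1)/2\big)$ is injective. The only cosmetic difference is that you quote Evertse in the general number-field form $3\cdot 7^{d+2s}$ and specialize to $d=1$, while the paper cites the rational case directly; also the separate treatment of $p=2$ is unnecessary, since $S$-units need not be integers and $(p\pm 1)/2$ are $S$-units for $p=2$ as well.
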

	\begin{proof} Put $S:=\{p_1,\ldots,p_s\}.$ Integers of the
		form $\pm p_1^{e_1}\cdot p_2^{e_1}\cdots p_s^{e_s}$ are
		called $S$-units.
		Evertse 
		\cite[Theorem 1]{Ever} proved that the equation 
		$a+b=1$ has at most $3\cdot 7^{1+2s}$ solutions 
		$(a,b)$ with $a$ and $b$ both S-units.
		If $P^+(p^2-1)\le x,$ then both
		$p-1$ and $p+1$ are $S$-units, with $S$ the 
		set of consecutive primes not excedding $x,$ 
		which has cardinality $\pi(x)$. Taking the difference
		and dividing by two leads to the S-unit equation $a+b=1.$
	\end{proof}
	Combining this result with \thmref{dkpbound}, we obtain the following result.
	\begin{thm}
		Let $m\ge 1$ be an integer. 
		Then, with the exception of at most 
		$O(e^{e^{2km/5}})$ primes $p,$ 
		we have
		$d_k(p)\ge m.$
	\end{thm}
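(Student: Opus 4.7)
The plan is to combine Theorem \ref{dkpbound} with the preceding Evertse-based Proposition in a direct counting argument. Theorem \ref{dkpbound} gives
$$d_k(p)\ge \frac{5\log P^+(p^2-1)}{2k},$$
so the inequality $d_k(p)\ge m$ holds as soon as $P^+(p^2-1)\ge e^{2km/5}$. Consequently, the primes $p$ for which $d_k(p)<m$ are contained in the exceptional set
$$\mathcal{E}_m := \{p \text{ prime} : P^+(p^2-1) \le x\},\qquad x:=e^{2km/5}.$$

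By the Proposition (an immediate consequence of Evertse's bound on $S$-unit equations), we have $|\mathcal{E}_m|\le 3\cdot 7^{1+2\pi(x)}$. To convert this into the stated shape, invoke a Chebyshev-type estimate $\pi(x)\ll x/\log x$, which yields
$$7^{1+2\pi(x)} \;\ll\; \exp\!\left(\frac{C\,x}{\log x}\right) \;=\; \exp\!\left(\frac{C'\,e^{2km/5}}{km}\right)$$
for absolute constants $C,C'>0$. Once $km$ exceeds $C'$, the right-hand side is at most $\exp(e^{2km/5})$; the finitely many remaining values of $m$ contribute only a constant depending on $k$, which is harmlessly absorbed into the $O$-notation. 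This gives $|\mathcal{E}_m|=O(e^{e^{2km/5}})$, as required.

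I expect no real obstacle: the hard work has already been done in Theorem \ref{dkpbound} (the arithmetic input via Deligne's bound on Fourier coefficients) and in the Proposition (Evertse's theorem). What remains is a routine two-line computation using the Chebyshev bound on $\pi$; one only needs to be slightly careful that the doubly-exponential outer factor $e^{e^{2km/5}}$ indeed dominates the singly-exponential expression $e^{O(x/\log x)}$ for all sufficiently large $m$, which is automatic.
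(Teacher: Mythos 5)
Your proof is correct and takes essentially the same approach the paper intends: combine Theorem \ref{dkpbound} (which turns $d_k(p)<m$ into $P^+(p^2-1)<e^{2km/5}$) with the Evertse-based Proposition, then bound $3\cdot 7^{1+2\pi(x)}$ via Chebyshev to fit inside $e^{e^{2km/5}}$. The paper states the result with only the one-line remark that it follows by combining the two; your write-up just spells out that routine computation.
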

	Proving a similar result for $d_k^{\rm new}(p)$ is an open problem.
	\section{Proof of \thmref{non-divisibility}}
	\label{pietersproof}
	A set of natural numbers $S$ is said to be \emph{multiplicative} if its
	characteristic function is a multiplicative function. 
	The set $B$ of natural numbers that can be written as a sum of two squares provides an example (as 
	already Fermat 
	knew). 
	One can 
	wonder about the asymptotic behavior of $S(x)$,  
	the number of positive integers $n\le x$ that are in $S$.
	An important role in understanding $S(x)$ is played by the Dirichlet series 
	\begin{equation}
		\label{LS}
		L_S(s) := \sum_{n\in S}n^{-s},
	\end{equation}
	which converges for $\Re(s) > 1$.
	If the limit
	\begin{equation}
		\label{EKf} 
		\gamma_S:=\lim_{s\rightarrow 1^+}\bigg(
		\frac{L'_S(s)}{L_S(s)} 
		+\frac{\alpha}{s-1}\bigg)
	\end{equation}
	exists for some $\alpha\ne 0$, we say that the set $S$ admits an \emph{Euler-Kronecker constant} $\gamma_S$. A leisurely account
	of the theory of Euler-Kronecker constants with plenty of examples
	is given in Moree \cite{morSx}.
	
	\begin{proof}[Proof of Theorem \ref{non-divisibility}]
		We first consider the non-divisibility of $\sigma_{k-1}(n)$ by
		$\ell.$ Note that $\ell\nmid \sigma_{k-1}(n)$ if and only if
		$\ell\nmid \sigma_r(n),$ where $r=(k-1,\ell-1).$
		Let $g_{p_1}$ be the multiplicative order
		of $p_1^{r}$ modulo $\ell,$ with $p_1\ne \ell$ an arbitrary
		prime number.
		We let
		\begin{equation}
			\label{gp-def}
			\mu_{p_1}=\begin{cases}  \ell&\text{if~}g_{p_1}=1,\\
				g_{p_1} &\text{if~}g_{p_1}>1.\end{cases}
		\end{equation}
		Put $S_1:=\{n:\ell\nmid \sigma_r(n)\}.$
		Rankin \cite{Ts} showed that
		\begin{equation}
			\label{EulerProductTsgeneral1}
			L_{S_1}(s)=\frac1{1-\ell^{-s}}\prod_{p_1\ne \ell}\frac{1-p_1^{-(\mu_{p_1}-1)s}}{(1-p_1^{-s})(1-p_1^{-\mu_{p_1}s})}.
		\end{equation}
		The proof of this is short and 
		can also be found in \cite{clm}.
		Set $f(n)=\sigma_{k-1}(n)+\varepsilon p^{k/2}\sigma_{k-1}(n/p).$ Write
		$n=mp^e,$ with $m$ coprime to $p.$ Note that $f(m)=\sigma_{k-1}(m).$
		Writing momentarily $\sigma$ instead of $\sigma_{k-1},$ we have
		$$f(mp^e)=\sigma(mp^e)+\varepsilon p^{k/2}\sigma(mp^{e-1})=\sigma(m)\sigma(p^e)+\varepsilon p^{k/2}\sigma(m)\sigma(p^{e-1})=f(m)f(p^e),$$
		and so $f$ is a multiplicative function. Put $S_2(p):=\{n:\ell\nmid f(n)\}$.
		The multiplicativity of $f$ implies that
		$$L_{S_2(p)}(s)=\prod_{p_1}\sum_{\substack{a\ge 0\\ \ell\nmid f(p_1^a)}}\frac{1}{p_1^{as}}$$
		has an Euler product, the sums being the Euler product
		factors.
		For the primes $p_1\ne p,$ we have
		$\ell\nmid f(p_1^a)$ if and only if $\ell\nmid \sigma_r(p_1^a)$ and we get the same Euler product factors as 
		in \eqref{EulerProductTsgeneral1}.
		Since $\varepsilon p^{k/2}=-1\pmod*{\ell}$ by assumption, we have
		$$f(p^a)\equiv 
		\sigma_{k-1}(p^a)-\sigma_{k-1}(p^{a-1})\equiv p^{a(k-1)}\not\equiv 0\pmod*{\ell},$$ and we 
		conclude that the Euler product factor at $p_1=p$ is $(1-p^{-s})^{-1}$.
		We infer that
		\begin{equation}
			\label{EulerProductTsgeneral2}
			L_{S_2(p)}(s)=\frac1{(1-\ell^{-s})}\frac1{(1-p^{-s})}\prod_{\substack{p_1\ne \ell\\ p_1\ne p}}\frac{1-p_1^{-(\mu_{p_1}-1)s}}{(1-p_1^{-s})(1-p_1^{-\mu_{p_1}s})}=
			L_{S_1}(s)\frac{(1-p^{-\mu_{p}s})}{1-p^{-(\mu_{p}-1)s}}.
		\end{equation}
		Comparing the logarithmic derivative of $L_{S_2(p)}(s)$ with that of 
		$L_{S_1}(s),$ we
		obtain
		$$
		\frac{L'_{S_2(p)}(s)}{L_{S_2(p)}(s)}=
		\frac{L'_{S_1}(s)}{L_{S_1}(s)}+\log p\left(\frac{\mu_p}{p^{\mu_ps}-1}-\frac{(\mu_p-1)}{p^{(\mu_p-1)s}-1}\right).
		$$
		The proof of \eqref{gammarelator} is then completed on invoking \eqref{EKf}. 
		\par The prime counting functions
		$\#\{p_1\le x:~\ell\mid \sigma_r(p_1)\}$ and
		$\#\{p_1\le x:~\ell\mid f(p_1)\}$
		differ
		by at most one for every $x.$ As the first counting function
		satisfies \eqref{primecondition} with
		$\delta=r/(\ell-1),$ so does the second, and the
		proof is completed on account of Theorem \ref{conditionA}.
	\end{proof}

	\noindent \textbf{Acknowledgment.} The authors would like to thank Shaunak Deo for several useful discussions, going through the paper carefully and giving useful suggestions. We also thank Jaban Meher for sharing his ideas in the proof of \thmref{main2} and Dan Fretwell for his useful comments. Matteo Bordignon 
	explained the authors how to compute $\rho(u)$ using SAGE, Zhiwei Wang updated them on the literature on friable shifted primes and Nicolas Billerey and Ricardo Menares pointed
	out the existence of \cite{verynew}. The third 
	author thanks Younes Nikdelan for his meticulous proofreading of earlier
	versions and frequent discussions on his related preprint \cite{you}. The authors thank the anonymous referees for a careful reading of the manuscript and  for giving valuable suggestions.
	
	\par The research of the first author was supported by the grant no. 692854 provided by the European Research Council (ERC) while the second author was supported by Israeli Science Foundation grant  1400/19. This work was carried out when these two authors were postdoctoral fellows at Hebrew University of Jerusalem and Bar-Ilan University respectively. 
	\par The open-source mathematics software SAGE (www.sagemath.org) has been used for numerical computations in this work.
	
\end{document}